\theoremstyle{plain}
\newtheorem{theorem}{Theorem}[section]
\newtheorem{proposition}[theorem]{Proposition}
\newtheorem{lemma}[theorem]{Lemma}
\newtheorem{corollary}[theorem]{Corollary}
\theoremstyle{definition}
\newtheorem{definition}[theorem]{Definition}
\newtheorem{example}[theorem]{Example}
\newcommand{\Ie}{\textit{I.e.,}\xspace}
\newcommand{\ie}{\textit{i.e.,}\xspace}
\newcommand{\Cf}{\textit{Cf.}\ }
\newcommand{\cf}{\textit{cf.}\ }
\newcommand{\eg}{\textit{e.g.}\xspace}
\newcommand{\R}{\ensuremath{\mathbb R}}		
\newcommand{\T}{\mathbb{T}}
\newcommand{\TP}{\ensuremath{\mathbb {TP}}}	
\newcommand{\mc}{\mathcal}
\newcommand{\tiff}{if and only if\xspace}
\newcommand{\twlog}{without loss of generality\xspace}
\newcommand{\tom}{tropical oriented matroid\xspace}
\newcommand{\toms}{tropical oriented matroids\xspace}
\newcommand{\tphp}{tropical pseudohyperplane\xspace}
\newcommand{\tphps}{tropical pseudohyperplanes\xspace}
\newcommand{\thp}{tropical hyperplane\xspace}
\newcommand{\thps}{tropical hyperplanes\xspace}
\newcommand{\thpas}{\thp arrangements\xspace}
\newcommand{\atphp}{arrangement of \tphps}
\newcommand{\tphpa}{\tphp arrangement\xspace}
\newcommand{\tphpas}{\tphp arrangements\xspace}
\newcommand{\mixsd}{mixed subdivision\xspace}		
\newcommand{\mixsds}{mixed subdivisions\xspace}	
\newcommand{\ndmixsd}{\mixsd of $\dilsimp n{d-1}$\xspace}
\newcommand{\dnmixsd}{\ndmixsd}
\newcommand{\dnmixsds}{\mixsds of $\dilsimp n{d-1}$\xspace}
\newcommand{\defn}[2][!*!,!]{\emph{#2}}
\newcommand{\coloneq}{\mathrel{\mathop:}=}
\newcommand{\eqcolon}{=\mathrel{\mathop:}}
\newcommand{\simplex}{\triangle}			
\newcommand{\nsimplex}[1]{\simplex^{#1}}	
\newcommand{\dual}[1]{#1 ^{*}}				
\DeclareMathOperator{\card}{\#\!}			
\DeclareMathOperator{\rank}{rank}			
\newcommand{\isom}{\cong}					
\newcommand{\BIGOP}[1]{\mathop{\mathchoice%
{\raise-0.22em\hbox{\huge $#1$}}%
{\raise-0.05em\hbox{\Large $#1$}}{\hbox{\large $#1$}}{#1}}}
\newcommand{\deletion}[2]{#1_{\setminus #2}}	
\newcommand{\contraction}[2]{#1_{/#2}}			
\newcommand{\restr}[2]{#1|_{#2}}				
\newcommand{\compgrop}{%
	\mathrel{\vcenter{\offinterlineskip
	\hbox{C\,}\vskip-1ex\hbox{\,\,G}}}}
\newcommand{\compgr}[2]{\compgrop_{#1,#2}}		
\newcommand{\omitted}[1]{\widehat{#1}}			
\newcommand{\dilsimp}[2]{{#1}\nsimplex{{#2}}}		
\newcommand{\simpprod}[2]{\nsimplex{#1}\times\nsimplex{#2}}	
\begin{document}
\author{Silke Horn\\TU Darmstadt\\\url{shorn@opt.tu-darmstadt.de}}
\title{A Topological Representation Theorem\\for Tropical Oriented Matroids: Part I}

\maketitle

\begin{abstract}
Tropical oriented matroids were defined by Ardila and Develin in 2007 in analogy to (classical) oriented matroids. 
In this paper we present one tropical analogue for the Topological Representation Theorem.  

\end{abstract}

\section{Introduction}
Oriented matroids abstract the combinatorial properties of arrangements of real hyperplanes and are ubiquitous in combinatorics. In fact, an arrangement of $n$ (oriented) real hyperplanes in $\R^d$ induces a regular cell decomposition of $\R^d$. Then the covectors of the associated oriented matroid encode the position of the points of $\R^d$ (respectively, the cells in the subdivision) relative to the each of the hyperplanes in the arrangement. It turns out though that there are oriented matroids which cannot be realised by any arrangement of hyperplanes.
The famous Topological Representation Theorem by Folkman and Lawrence \cite{Folkman/Lawrence} (see also \cite{BLSWZ}), however, states that every oriented matroid can  be realised as an arrangement of PL-\emph{pseudo}hyperplanes. 

In this paper, we will study a \emph{tropical} analogue of oriented matroids.

\bigskip
Tropical geometry is the algebraic geometry over the tropical semiring $(\bar\R\coloneq\R\cup\{\infty\},\oplus,\otimes)$, where \[\oplus:\bar\R\times\bar\R\to\bar\R:a\oplus b\coloneq \min\{a,b\}\qquad \text{and}\qquad \otimes:\bar\R\times\bar\R\to\bar\R: a\otimes b\coloneq a+b\] are the tropical addition and multiplication. 
A \thp is the vanishing locus of a linear tropical polynomial, \ie the set of points $x$ where the minimum $p(x)=\BIGOP\oplus(a_i\otimes x_i)$ is attained at least twice.

Tropical geometry can be seen as a piecewise linear image of classical algebraic geometry and has in the past years received attention since there are often strong relationships between classical problems and their tropical analogues, see \eg \cite{Ardila/Billey, Ardila/Klivans, Develin/Sturmfels, Mikhalkin06}.

Moreover, tropical geometry has far reaching connections to other objects of discrete geometry:
By Develin and Sturmfels \cite{Develin/Sturmfels} \emph{regular} subdivisions of $\simpprod{n-1}{d-1}$ are dual to arrangements of $n$ \thps in $\T^{d-1}$. See Figure \ref{fig:mixsd2thp} for an illustration.
By the \defn{Cayley Trick} (\cf Huber, Rambau and Santos \cite{HRS00}) subdivisions of $\simpprod{n-1}{d-1}$ are in bijection with \mixsds of $\dilsimp n{d-1}$.

\medskip 
Here we will take a more combinatorial point of view and study the relationship of arrangements of \thps and \dnmixsds: 
Combinatorially, a \thp in $\T^{d-1}$ is just the (co\-dimen\-sion-$1$-skeleton of the) polar fan of the $(d-1)$-dimensional simplex $\nsimplex{d-1}$. 
For a $(d-2)$-dimensional tropical hyperplane $H$ the $d$ connected components of $\TP^{d-1}\setminus H$ are called the \defn{(open) sectors} of $H$ -- they form the analogues to the sides $+/-$ of the classical oriented hyperplanes.

As in the classical situation, an arrangement of $n$ \thps in $\T^{d-1}$ induces a cell decomposition of $\T^{d-1}$ and each cell can be assigned a \defn{type} or \defn{tropical covector} that describes its position relative to each of the \thps. To be precise, the point $p$ is assigned the type $A=(A_1,\ldots,A_n)$ where $A_i\subseteq[d]$ denotes the set of closed sectors of the $i$-th \thp in which $p$ is contained. See Figure \ref{fig:thpa} for an illustration in dimension $2$.

\begin{figure}[t]
\centering
\subfigure[A (regular) \mixsd of $\dilsimp22$.]
{\label{fig:mixsd2thp_a}\includegraphics[width=3.5cm]{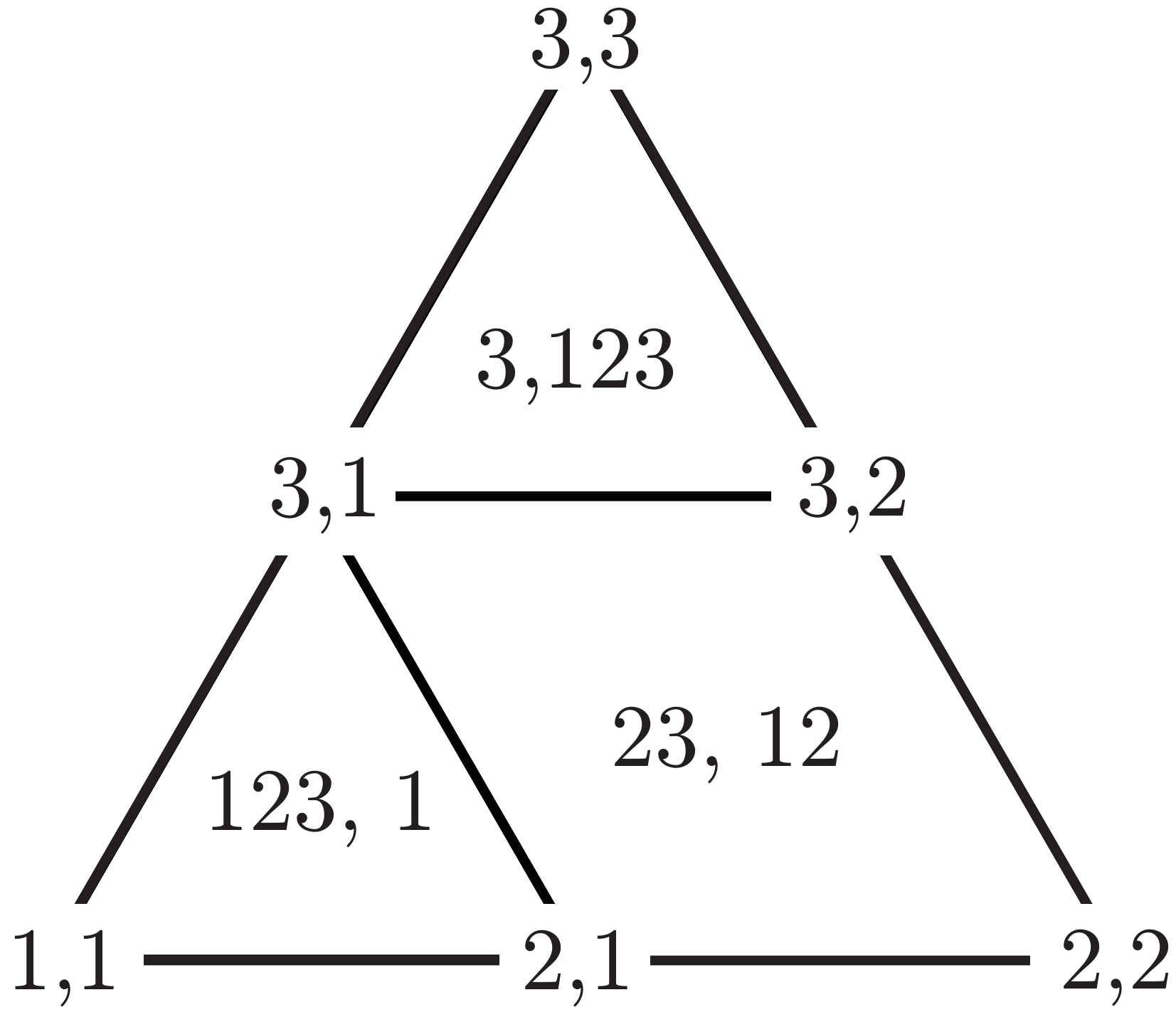}}\qquad
\subfigure[The Poincaré dual \mbox{of \subref{fig:mixsd2thp_a}}.]
{\label{fig:mixsd2thp_b}\includegraphics[width=3.5cm]{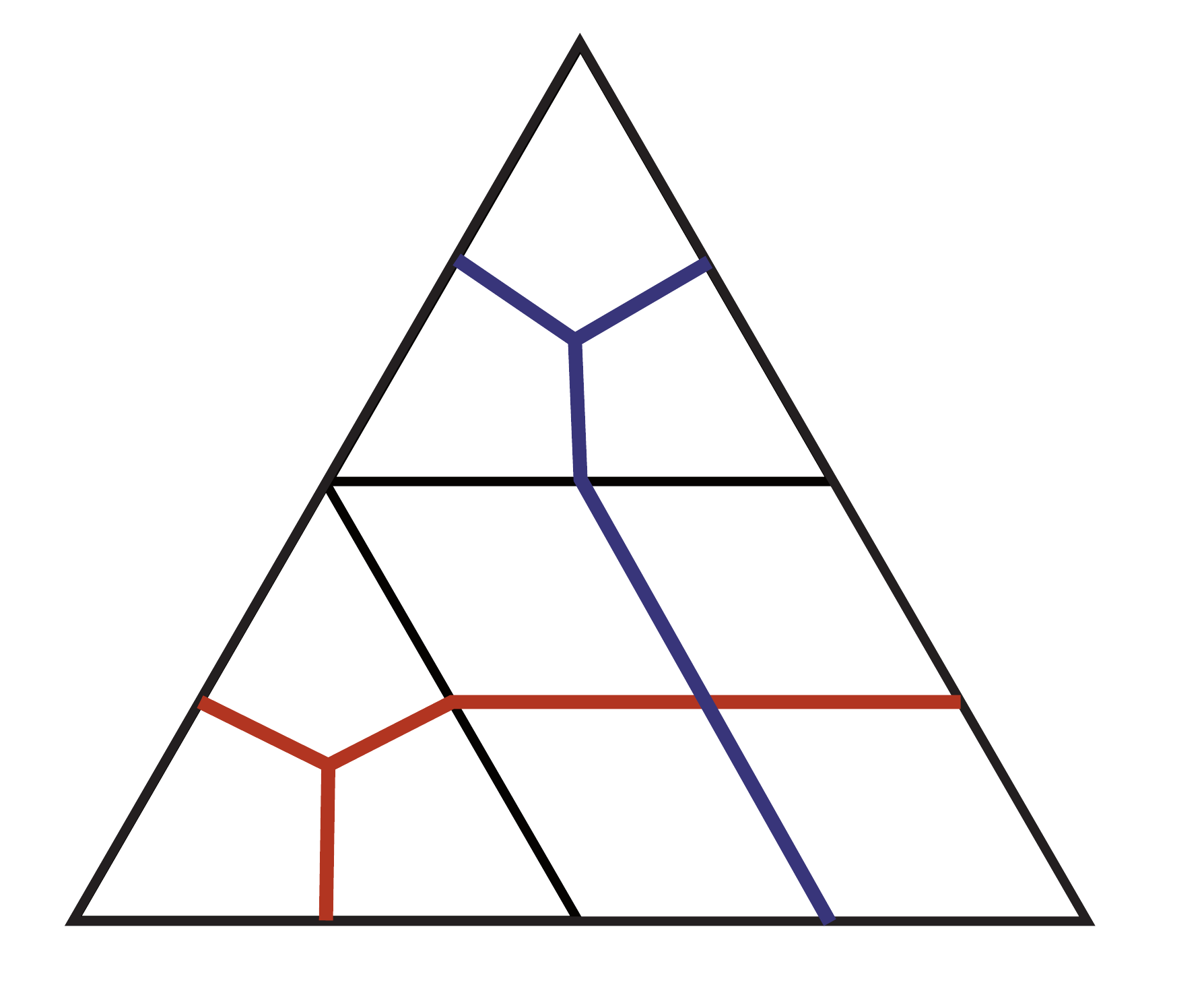}}\qquad
\subfigure[An arrangement of \thps.\label{fig:thpa}]
{\includegraphics[width=3.5cm]{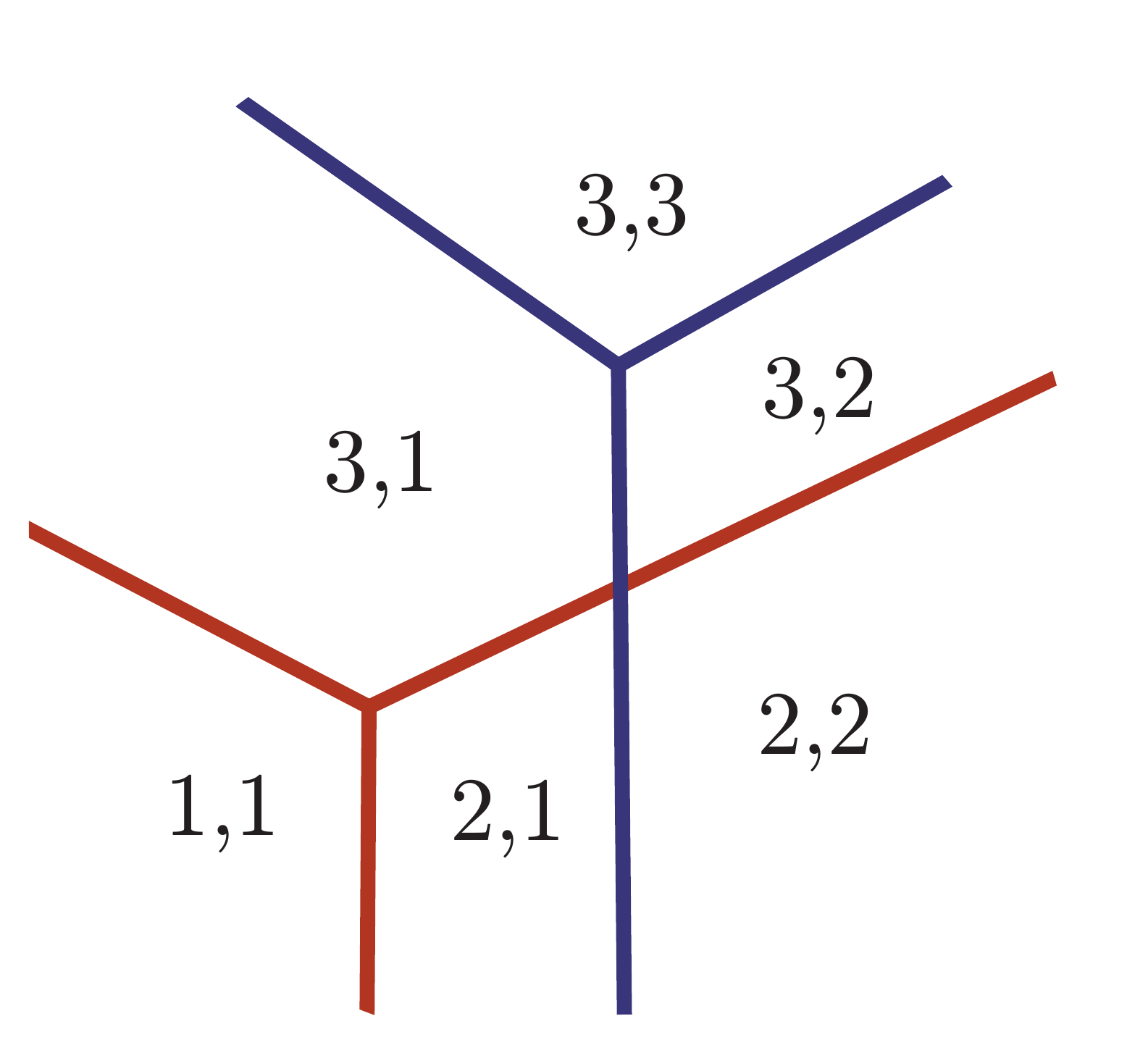}}
\caption{The correspondence between \mixsds and \tphpas.}
\label{fig:mixsd2thp}
\end{figure}

By \cite[Theorem 6.3]{Ardila/Develin}, the types of a \tom with parameters $(n,d)$  yield a subdivision of $\simpprod{n-1}{d-1}$.
They also conjecture this to be a bijection, \ie they conjecture that the types of the cells in any \mixsd of $\dilsimp n{d-1}$ are the types of a \tom with parameters $(n,d)$. 

In Oh and Yoo \cite{Oh/Yoo} the conjecture is proven for \emph{fine} \mixsds; in \cite{toprep2} we generalise this result to all \dnmixsds.

\bigskip
In this paper we introduce arrangements of \tphps (see Definition \ref{def:tphpa1}) and prove one tropical analogue to the Topological Representation Theorem for (classical) oriented matroids \cite{Folkman/Lawrence}.

A \defn{\tphp} is basically a set which is PL-homeomorphic to a \thp (see also Definition \ref{def:tphp}). The challenging part is the definition of arrangements of such: We have to impose restrictions on the intersections of the pseudohyperplanes in the arrangement. In the classical framework, the intersections of the hyperplanes in the arrangement have to be homeomorphic to linear hyperplanes (of smaller dimension). In the tropical world, however, this approach is not feasible, since intersections of tropical hyperplanes are no longer homeomorphic to tropical hyperplanes (but have a very complicated geometry). We choose a different approach instead. We employ the fact that the covectors of \toms define \dnmixsds and impose restrictions on the cell decomposition induced by the \tphps in the arrangement.  
We this definition we prove the main result of this paper:

\begin{theorem}[{Topological Representation Theorem, \cf Theorem \ref{thm:toprep1}}]
Every \tom  can be realised by an \atphp.
\end{theorem}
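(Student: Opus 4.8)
The plan is to convert the purely combinatorial datum of a \tom into an honest geometric object by passing through \dnmixsds, exactly along the chain of correspondences recalled in the introduction. Let $M$ be a \tom with parameters $(n,d)$. By \cite[Theorem 6.3]{Ardila/Develin} its covectors induce a subdivision of $\simpprod{n-1}{d-1}$, which under the Cayley Trick \cite{HRS00} becomes a \dnmixsd $D$. The first real step is to realise $D$ geometrically: I would construct a PL cell decomposition $\mathcal C$ of $\TP^{d-1}$ that is \emph{dual} to $D$ in the sense of Develin and Sturmfels \cite{Develin/Sturmfels}, so that the cells of $\mathcal C$ stand in inclusion-reversing bijection with the interior cells of $D$ and each inherits the corresponding covector of $M$ as its type.

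The second step is to read off the pseudohyperplanes from $\mathcal C$. For each $i\in[n]$ I would let $P_i$ be the union of the closed cells $\sigma$ of $\mathcal C$ whose type $A=(A_1,\dots,A_n)$ satisfies $\card A_i\ge 2$, \ie the cells lying on a common wall of two sectors of the $i$-th hyperplane; dually, $\card A_i\ge 2$ says precisely that the $i$-th Minkowski summand of the corresponding cell of $D$ is at least an edge, so $P_i$ is exactly the locus dual to the nontrivial use of the $i$-th summand. One then checks that the $d$ complementary regions of $P_i$, namely the cells with $A_i=\{j\}$ for $j\in[d]$, behave as the $d$ sectors, and that the assignment of types induced by the overlay $\{P_1,\dots,P_n\}$ agrees cell by cell with the covectors of $M$; this last point is automatic from the way types were attached to $\mathcal C$.

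The main obstacle is the topological content underlying these two steps: one must prove that each $P_i$ is genuinely a \tphp, \ie PL-homeomorphic to a \thp in the sense of Definition \ref{def:tphp}, and that the collection satisfies all the conditions of Definition \ref{def:tphpa1}. When $D$ is \emph{regular}, $\mathcal C$ is the genuine linear arrangement of Develin and Sturmfels and there is nothing to prove; but a \tom need not be regular, nor even realisable, so $\mathcal C$ must be built as a PL rather than a linear object. This is the exact tropical counterpart of the passage from realisable oriented matroids to PL pseudohyperplane arrangements in \cite{Folkman/Lawrence}. To handle it I would examine the single-coordinate restriction of $M$ to the index $i$: this is a one-element \tom and hence realised by one \thp, which pins down the local structure of $\mathcal C$ along $P_i$ as the polar fan of $\nsimplex{d-1}$, cone by cone. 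Globally one must then verify that $P_i$ separates $\TP^{d-1}$ into exactly $d$ pseudosectors with the adjacency pattern of a \thp, and here the \tom axioms (comparability, surrounding and elimination) are what guarantee the correct incidences and rule out the self-intersections that would break the PL-homeomorphism.

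Having shown that each $P_i$ is a \tphp and that their overlay reproduces $\mathcal C$ with the prescribed types, the collection $\{P_1,\dots,P_n\}$ constitutes an \atphp realising $M$ in the sense of Definition \ref{def:tphpa1}, which completes the argument. I expect the regular case to serve both as a sanity check and as the template for the PL construction, with all the difficulty concentrated in establishing the PL-homeomorphism type of the individual $P_i$ for non-regular $M$.
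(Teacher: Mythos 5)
Your overall skeleton is the paper's: pass from the \tom to a \dnmixsd via \cite[Theorem 6.3]{Ardila/Develin}, take the Poincar\'e dual, and declare the $i$-th pseudohyperplane to be the union of dual cells whose type satisfies $\card{A_i}\geq2$. But the theorem's entire content is concentrated in the step you defer --- proving that each $P_i$ is genuinely PL-homeomorphic to a \thp --- and the strategy you sketch for it would not go through as stated. Restricting $M$ to the single coordinate $i$ gives a one-element \tom, but this restriction collapses all information about how $P_i$ is embedded in the cell complex: knowing that the local structure along $P_i$ looks cone-by-cone like the polar fan of $\nsimplex{d-1}$ does not assemble into a global PL homeomorphism, and your appeal to comparability, surrounding and elimination to ``rule out self-intersections'' is exactly the assertion to be proven, not a lemma you can invoke. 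This is a genuine gap, not a routine verification.

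The paper closes this gap quite differently (Theorem \ref{thm:dual_phps}): it inducts on $n$, deleting one coordinate $j\neq i$ at a time rather than all coordinates but $i$ at once. By induction the image of $H_i=\{\dual C\mid \card{C_i}\geq2\}$ in the deletion $\deletion Sj$ is a \tphp $h$, and $H_i$ is the preimage of $h$ under the deletion map; Lemma \ref{lem:deletion_preimage_connected} shows that two cells with the same deletion merge into a single (prism-like) cell $A\cup B$ of $S$, which makes the preimage PL-homeomorphic to $h$. That lemma in turn rests on Proposition \ref{prop:reconstruct_mixsd}, the reconstruction of a \dnmixsd from its topes --- a substantive combinatorial result with no counterpart in your outline. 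You would also need to verify the remaining conditions of Definition \ref{def:tphpa1} (equal-type points forming PL-balls, boundedness of cells matching boundedness of types), which the paper dispatches via Theorem \ref{thm:dual_phps} together with \cite[Proposition 6.4]{Ardila/Develin}; your proposal treats these as automatic. To repair your argument, replace the one-shot restriction to coordinate $i$ by this coordinate-at-a-time deletion induction, and supply the prism lemma that controls preimages of the deletion map.
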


The paper is organised as follows: In Section \ref{sec:toms} we briefly review the definition of \toms as given in \cite{Ardila/Develin}. Section \ref{sec:mixsds} is dedicated to \mixsds of dilated simplices. In particular, we prove some results strengthening the close relationship to \toms that are, to the best of our knowledge, not yet in the literature. Moreover, we show  that a \dnmixsd is uniquely determined by (the types of) its $0$-cells (Proposition \ref{prop:reconstruct_mixsd}). In Section \ref{sec:toprep} we introduce \tphpas and prove a first version of the Topological Representation Theorem. Another version and a second (equivalent) definition of \tphpas are given in \cite{toprep2}.

\bigskip
A joint extended abstract \cite{toprep_fpsac} of this and \cite{toprep2} has been presented at FPSAC 2012. Moreover, the results in this paper are also contained in \cite{mydiss}.

\section{Tropical Oriented Matroids}\label{sec:toms}

The following definitions are analogous to those in \cite{Ardila/Develin}.

\begin{definition}
\label{def:nd_type}
For $n,d\geq1$ an \defn[(n,d)-type]{$(n,d)$-type} is an $n$-tuple $(A_1,\ldots,A_n)$ of non-empty subsets \mbox{of $[d]$}.

An $(n,d)$-type $A$ can be represented as a subgraph $K_A$ of the complete bipartite graph $K_{n,d}$: Denote the vertices of $K_{n,d}$ by $N_1,\ldots,N_n,D_1,\ldots,D_d$. Then the edges of $K_A$ are $\{\{N_i,D_j\}\mid j\in A_i\}$.
\end{definition}

For convenience we will write sets like $\{1,2,3\}$ as $123$ throughout this article.

In particular, the types of points relative to an arrangement of $n$ tropical hyperplanes of dimension $d-1$ as described above are $(n,d)$-types.

\medskip
A \defn{refinement} of an $(n,d)$-type $A$ with respect to an ordered partition $P=(P_1,\ldots,P_k)$ of $[d]$ is the $(n,d)$-type $B=\restr A P$ where $B_i=A_i\cap P_{m(i)}$ and $m(i)$ is the smallest index where $A_i\cap P_{m(i)}$ is non-empty for each $i\in[n]$. A refinement is \defn[total!refinement]{total} if all $B_i$ are singletons.

\smallskip
Given $(n,d)$-types $A$ and $B$, the \defn{comparability graph} $\compgr AB$ is a multigraph with node set $[d]$. For $1\leq i\leq n$ there is an edge for every $j\in A_i,k\in B_i$. This edge is undirected if $j,k\in A_i\cap B_i$ and directed $j\rightarrow k$ otherwise. (We consider the comparability graph as a graph without loops.)
Note that there may be several edges (with different directions) between two nodes.

A \defn[]{directed path} in the comparability graph is a sequence $e_1,e_2,\ldots,e_k$ of incident edges at least one of which is directed and all directed edges of which are directed in the ``right'' (\ie the same) direction. A \defn[]{directed cycle} is a directed path whose starting and ending point agree. The graph is \defn[]{acyclic} if it contains no directed cycle.


\begin{definition}
\label{def:trop_or_mat}
A \defn{tropical oriented matroid} $M$ (with parameters $(n,d)$) is a collection of $(n,d)$-types \index{type!in a tropical oriented matroid|see{$(n,d)$-type}} which satisfies the following four axioms:
\begin{itemize}
\item\defn[boundary axiom]{Boundary}: For each $j\in[d]$, the type $(j,j,\ldots,j)$ is in $M$.
\item\defn[comparability axiom]{Comparability}: The comparability graph $\compgr AB$ of any two types $A,B\in M$ is acyclic.
\item\defn[elimination axiom]{Elimination}: If we fix two types $A, B\in M$ and a position $j\in[n]$, then there exists a type $C$ in $M$ with $C_j=A_j\cup B_j$ and $C_k\in\{A_k,B_k,A_k\cup B_k\}$ for $k\in[n]$.
\item\defn[surrounding axiom]{Surrounding}: If $A$ is a type in $M$, then any refinement of $A$ is also in $M$.
\end{itemize}
We call $d\eqcolon\rank M$ the \defn[rank!of a \tom]{rank} and $n$ the \defn[size!of a \tom]{size} of $M$. 
\end{definition}

\begin{example}
By \cite[Theorem 3.6]{Ardila/Develin} the set of types of an arrangement of $n$ tropical hyperplanes in $\T^{d-1}$ is a \tom with parameters $(n,d)$.
\end{example}
We  call \toms coming from an arrangement of \thps \defn{realisable}. Recall that by Develin and Sturmfels \cite{Develin/Sturmfels} realisable \toms are in bijection with \emph{regular} \dnmixsds.

\smallskip
The axiom system was built to capture the features of the set of types in \thpas and thus the axioms have geometric interpretations: 

The \emph{boundary axiom} ensures that all tropical hyperplanes in the arrangement are embedded correctly into $\TP^{d-1}\isom\nsimplex{d-1}$.
The \emph{surrounding axiom} describes what the neighbourhood of a point of type $A$ (or equivalently, the star of the cell $A$ in the cell complex) looks like. 
The \emph{elimination axiom} describes the intersection of a tropical line segment from $A$ to $B$ with the $j$-th \thp.
Finally, the \emph{comparability axiom} ensures that we can declare a ``direction from $A$ to $B$''. Each position puts certain constraints on the direction vector, which may not contradict one another.

\begin{definition}\label{def:tom_props}
The \defn[dimension!of a type]{dimension} of an $(n,d)$-type $A$ is the number of connected components of $K_A$ minus $1$.
A \defn[vertex!in a \tom]{vertex} is a type of dimension $0$, an \defn[edge!in a \tom]{edge} a type of dimension $1$ and a \defn{tope} a type of full dimension $d-1$, \ie each tope is an $n$-tuple of singletons.


For two types $A,B$ we  write $A\supseteq B$ if $A_i\supseteq B_i$ for each $i\in[n]$. Moreover, we define the intersection $A\cap B\coloneq(A_1\cap B_1,\ldots,A_n\cap B_n)$ and union $A\cup B\coloneq (A_1\cup B_1,\ldots, A_n\cup B_n)$.

A type $A$ in a \tom $M$ is \defn[bounded!type]{bounded} if all elements of $[d]$ appear in $A$ and \defn[unbounded type]{unbounded} otherwise.
\end{definition}

A realisable \tom is in general position \tiff the corresponding arrangement of \thps is so.
Moreover, for a realisable \tom, the bounded types correspond to the bounded cells in the cell decomposition of $\T^{d-1}$.

\begin{definition}[\Cf {\cite[Propositions 4.7 and 4.8]{Ardila/Develin}}] \label{def:tom_deletion_contraction}
 Let $M$ be a \tom with parameters $(n,d)$.
\begin{enumerate}
\item For $i\in[n]$ the \defn[deletion!in a \tom]{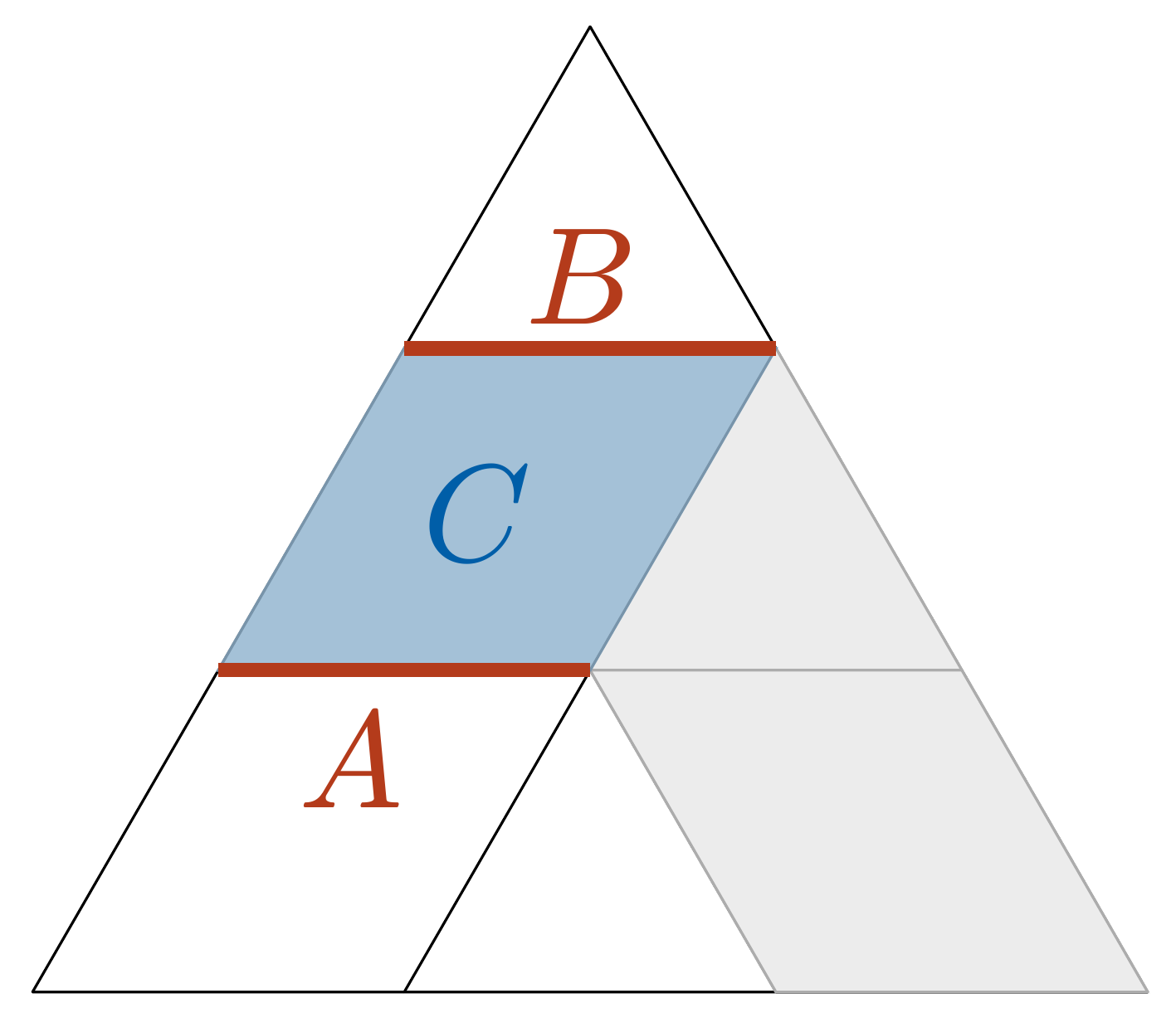} $\deletion Mi$ consisting of all $(n-1,d)$-types which arise from types of $M$ by deleting coordinate $i$ is a \tom with parameters $(n-1,d)$.
\item For $j\in[d]$ the \defn[contraction!in a \tom]{contraction} $\contraction Mj$ consisting of all types of $M$ that do not contain $j$ in any coordinate is a \tom with parameters $(n,d-1)$.
\end{enumerate}
\end{definition}


\section{Mixed Subdivisions}\label{sec:mixsds}

Given two sets $X,Y$ their \defn{Minkowski sum} $X+Y$ is given by $X+Y\coloneq\{x+y\mid x\in X,y\in Y\}$.

\begin{definition}\label{def:mixed_sd}
Let $P_1,\ldots,P_k\subset \R^n$ be (full-dimensional) convex polytopes. Then a polytopal subdivision  $\{Q_1,\ldots,Q_s\}$ of $P\coloneq\sum P_i$ is a \defn{mixed subdivision} if it satisfies the following conditions:
\begin{enumerate}
\item Each $Q_i$ is a Minkowski sum $Q_i=\sum\limits_{j=1}^k F_{i,j}$, where $F_{i,j}$ is a face of $P_j$.
\item For $i,j\in[s]$ we have that $Q_i\cap Q_j=(F_{i,1}\cap F_{j,1})+\ldots+(F_{i,k}\cap F_{j,k})$.
\end{enumerate}
\end{definition}

Note that this definition can easily be generalised for polytopes which are not full-dimensional.

Let $S,S'$ be \mixsds of $\dilsimp n{d-1}$. Then we say that $S'$ is a \defn[refinement!of a \mixsd]{refinement} of $S$  if for every cell $C'\in S'$ there is a cell $C\in S$ such that $C'\subseteq C$.
This defines a partial order on the set of \mixsds of $\dilsimp n{d-1}$.
A \mixsd is \defn[fine!mixed subdivision]{fine} if there is no \mixsd refining it. 
By Santos \cite[Proposition 2.3]{Santos03} this is equivalent to the condition that for every cell $B=\sum B_i$ all the $B_i$ lie in mutually 
independent affine subspaces (and this is satisfied \tiff $\dim B=\sum\dim B_i$).



\subsection{Mixed Subdivisions of \boldmath{$\dilsimp n{d-1}$}}

We are interested in the case of \mixsds where $P_i=\nsimplex{d-1}$ for each $i$. Then $\sum P_i=\dilsimp n{d-1}$ is a dilated simplex. By Ardila and Develin \cite[Theorem 6.3]{Ardila/Develin} the types of a \tom with parameters $(n,d)$ yield a \dnmixsd. A \tom is in general position if and only if its \mixsd is fine.

If $Q=\sum_{i=1}^k F_i$, where $F_i\subset[d]$ is a cell in such a \mixsd then we  call $(F_1, F_2,\ldots,F_k)$ its \defn[type!of a cell in a \mixsd]{type} and denote it by $\mc T_Q$. Note that this is an $(n,d)$-type as defined in Definition \ref{def:nd_type}.
Conversely, if we are given an $(n,d)$-type $A$ then this corresponds to a unique cell inside $\dilsimp n{d-1}$, which we denote by $\mc C_A$.

In general, we call a cell corresponding to an $(n,d)$-type, \ie a Minkowski sum of $n$ faces of $\nsimplex{d-1}$, a \defn{Minkowski cell}.

\smallskip
To avoid confusion with the vertices of \toms, we  speak of the $0$-dimensional cells of a \mixsd as \defn{topes}.

\medskip
We now establish some properties of \dnmixsds\ -- or more generally about $(n,d)$-types. Note that since we can describe the Minkowski cells in a \dnmixsd in terms of $(n,d)$-types, we can transfer properties of \toms (such as the boundary, surrounding, comparability or elimination property) as defined in Section \ref{sec:toms} to \dnmixsds.

\begin{lemma}\label{lem:refinement_iff_acyclic}
Let $A,B$ be two $(n,d)$-types with $A\subseteq B$. Then $A$ is a refinement of $B$ \tiff $\compgr AB$ is acyclic.
\end{lemma}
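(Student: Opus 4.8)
The plan is to unwind the definitions in the special case $A\subseteq B$ and then treat the two implications separately. Since $A_i\subseteq B_i$ for every $i$, we have $A_i\cap B_i=A_i$, so the comparability graph $\compgr{A}{B}$ admits a very clean description: for each position $i$ it contributes an undirected clique on the vertices of $A_i$ together with a directed edge $j\to k$ for every $j\in A_i$ and every $k\in B_i\setminus A_i$. Thus every directed edge points ``out of'' $A_i$ while every undirected edge stays ``inside'' $A_i$. I will use this picture throughout.

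For the forward implication, suppose $A=\restr{B}{P}$ for an ordered partition $P=(P_1,\ldots,P_k)$, so that $A_i=B_i\cap P_{m(i)}$ with $m(i)$ the least index meeting $B_i$. I would assign to each node $v\in[d]$ its \emph{level} $\ell(v)$, namely the index of the block of $P$ containing it. Along the edges of $\compgr{A}{B}$ this level behaves monotonically: an undirected edge keeps the vertices of $A_i\subseteq P_{m(i)}$ at the common level $m(i)$, while a directed edge $j\to k$ runs from $j\in A_i\subseteq P_{m(i)}$ to some $k\in B_i\setminus A_i$, which---because $m(i)$ is the first block meeting $B_i$---must lie in a strictly later block. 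Hence the level is non-decreasing along any directed path and strictly increases across every directed edge; a directed cycle would then have to both return to its starting level and strictly increase it, which is impossible. So $\compgr{A}{B}$ is acyclic.

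The converse is the substantive direction and the \emph{main obstacle}: given acyclicity I must manufacture an ordered partition $P$ with $\restr{B}{P}=A$. Reading off the conditions under which $\restr{B}{P}=A$, I need (i) each $A_i$ to sit inside a single block and (ii) every element of $B_i\setminus A_i$ to sit in a strictly later block than that of $A_i$. To build such a partition I would first pass to the undirected subgraph (the union of the $A_i$-cliques), whose connected components cluster $[d]$ into sets $V_1,\ldots,V_m$; each $A_i$, being a clique, lies in one cluster, which secures (i) once each cluster is kept within one block. The crucial observation is that acyclicity forbids any directed edge from joining two nodes of the same cluster: such an edge $j\to k$ together with an undirected path from $k$ back to $j$ inside the cluster would close up into a directed cycle. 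Consequently all directed edges run between distinct clusters, and the same lifting trick (routing through clusters along undirected paths and between them along directed edges) shows that a directed cycle in the condensation would lift to one in $\compgr{A}{B}$; hence the condensation is a directed acyclic graph.

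Finally I would topologically sort the clusters and declare the resulting order to be the ordered partition $P=(P_1,\ldots,P_m)$, with $P_r$ the $r$-th cluster. By construction each $A_i$ lies in one block, and for any $k\in B_i\setminus A_i$ the directed edges $j\to k$ (with $j\in A_i$) force $k$'s cluster strictly after $A_i$'s, giving (ii). A short check then confirms that $m(i)$ is exactly the block of $A_i$ and that $B_i\cap P_{m(i)}=A_i$, so $\restr{B}{P}=A$ and $A$ is a refinement of $B$. The only steps needing genuine care are the two lifting arguments, which guarantee that directed cycles cannot hide inside or between undirected clusters; everything else is bookkeeping.
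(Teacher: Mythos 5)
Your proof is correct and takes essentially the same route as the paper: your converse direction (clusters of the undirected subgraph, condensation DAG, topological sort as the ordered partition) is exactly the paper's contraction of undirected edges followed by a linear extension, with your lifting arguments merely spelling out what the paper asserts when it says the contracted graph is well-defined and acyclic. Your forward direction's level-function argument is likewise just a repackaging of the paper's observation that $\compgr AB$ is a subgraph of the acyclic block graph $H$ determined by $P$.
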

Note that we do not assume that the types in this lemma are contained in a \tom.
In particular, there is a \tom containing both $A$ and $B$ \tiff $\compgr AB$ is acyclic.
\begin{proof}
First assume that $\compgr AB$ is acyclic. Let $G$ be the directed graph obtained from $\compgr AB$ by contracting all undirected edges. This is well-defined and acyclic since $\compgr AB$ is acyclic. We will label the vertices of $G$ by the according subsets of $[d]$. Let $P=(P_1,\ldots,P_\ell)$ be a linear extension of the partial order on the vertices of $G$ that is defined by the edges. This process is illustrated in Figure \ref{fig:refinement_iff_acyclic}. We will now argue that $\restr BP=A$. Indeed by the definition of refinements, $(\restr BP)_i$ contains all elements of $B_i$ which come first in $P$. 
Since $A_i\subseteq B_i$,  in $\compgr AB$ every element of $A_i$ has an outgoing edge to each element of $B_i-A_i$. Hence in $P$ the elements of $A_i$ come before the elements of $B_i-A_i$. Moreover, the elements of $A_i$ form a clique in $\compgr AB$ and are thus contained in the same $P_i$. This shows that $(\restr BP)_i=A_i$ for each $i\in[n]$.
\begin{figure}[h]
\centering
\begin{tikzpicture}
\def\k{1};
\node (1) at (0,0) {$1$};
\node (2) at (-\k,-\k) {$2$};
\node (3) at (0,-2*\k) {$3$};
\node (4) at (1.5*\k,-2*\k) {$4$};
\node (5) at (2.5*\k,-\k) {$5$};
\node (6) at (1.5*\k,0) {$6$};

\draw (1)--(2)--(3)--(1);
\draw [->] (1)--(6);
\draw [->] (3)--(4);
\draw [->] (4)--(5);
\draw [->] (4)--(6);
\draw (5)--(6);
\end{tikzpicture}
\qquad\qquad
\begin{tikzpicture}
\def\k{1};
\node (123) at (-.7*\k,-\k) {$123$};
\node (4) at (\k,-2*\k) {$4$};
\node (56) at (\k,0) {$56$};
\draw [->] (123)--(4);
\draw [->] (123)--(56);
\draw [->] (4)--(56);
\end{tikzpicture}
\qquad\qquad
\begin{tikzpicture}
\def\k{1}; 
\node (P) at (0,-\k) {$P=123,4,56$};
\node (0) at (0,0) {};
\node (1) at (0,-2*\k) {};
\end{tikzpicture}
\caption[Illustration for the proof of Lemma \ref{lem:refinement_iff_acyclic}.]{Assume that in the proof of Lemma \ref{lem:refinement_iff_acyclic} we have $A=(123,1,3,4,56), B=(123,16,34,456,56)$. Then $A\subseteq B$ and $\compgr AB$ is the graph on the left. Then by contracting all undirected edges we obtain the graph $G$ drawn in the center. By fixing a linear extension of this (in fact, there is only one in this example) we get the ordered partition of $[6]$ on the right hand side. Moreover, one easily verifies that indeed $A=\restr BP$.}
\label{fig:refinement_iff_acyclic}
\end{figure}
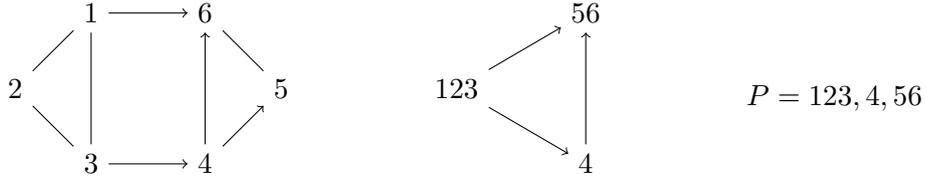

Conversely, assume that $A=\restr BP$ for some ordered partition $P$ of $[d]$. Consider the graph $H=([d],E)$ with an undirected edge $\{i,j\}$ for each $i,j\in P_a$ and a directed edge $i\to j$ whenever $i\in P_a, j\in P_b$ with $a<b$. Then clearly $H$ is acyclic. We now show that $\compgr AB$ is a subgraph of $H$, which completes the claim. Indeed let $i,j\in[d]$. If $\compgr AB$ has an undirected edge $\{i,j\}$ then there is $k\in[n]$ such that $i,j\in A_k\cap B_k$ and hence there is $P_\ell$ such that $i,j\in P_\ell$. 
On the other hand, if $\compgr AB$ has a directed edge $i\to j$ then there is $k\in[n]$ such that $i\in A_k\cap B_k$ but $j\in B_k-A_k$. If we choose $a,b$ such that $i\in P_a$ and $j\in P_b$ then we must have $a<b$.
\end{proof}

\begin{lemma}\label{lem:intersection_of_types} 
Let $A,B$ be two types in a \mixsd $S$ of $\dilsimp{n}{d-1}$. Then their intersection $A\cap B$ either has an empty position or is also a type in $S$.
\end{lemma}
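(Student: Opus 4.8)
The plan is to pass from the combinatorial types to their geometric Minkowski cells and to read off the intersection directly from the second condition of Definition \ref{def:mixed_sd}. Recall that a type $A$ corresponds to the Minkowski cell $\mc C_A=\sum_{i=1}^n F_{A_i}$, where $F_{A_i}$ denotes the face of $\nsimplex{d-1}$ on the vertex set $A_i$, and similarly for $B$. Since $A$ and $B$ are types in $S$, both $\mc C_A$ and $\mc C_B$ are cells of the subdivision, so I can feed them into the mixed-subdivision axiom.

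The one place where the simplex structure is used is the following elementary fact: for subsets $F,G\subseteq[d]$ the faces of $\nsimplex{d-1}$ they span intersect in the face spanned by $F\cap G$, because in a simplex every subset of vertices spans a face and two faces meet exactly along their common vertices. Applying the second condition of Definition \ref{def:mixed_sd} to $\mc C_A$ and $\mc C_B$ and then using this identity coordinatewise gives
\[ \mc C_A\cap \mc C_B=\sum_{i=1}^n\bigl(F_{A_i}\cap F_{B_i}\bigr)=\sum_{i=1}^n F_{A_i\cap B_i}. \]
Now I would split into two cases. If some position has $A_i\cap B_i=\emptyset$, then $A\cap B$ has an empty position and there is nothing to prove. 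Otherwise every $A_i\cap B_i$ is non-empty, and the right-hand side is exactly the Minkowski cell $\mc C_{A\cap B}$ of the $(n,d)$-type $A\cap B$.

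Finally I would invoke that $S$ is a polytopal subdivision: the intersection of its two cells $\mc C_A$ and $\mc C_B$ is a common face and hence again a cell of $S$, and the decomposition furnished by the second condition of Definition \ref{def:mixed_sd} exhibits the type of that cell as precisely $A\cap B$. Therefore $A\cap B$ is a type in $S$, as claimed. The only real content lies in the middle step, where the simplex-specific identity $F_{A_i}\cap F_{B_i}=F_{A_i\cap B_i}$ is combined with the mixed-subdivision axiom so that the set-theoretic intersection of the two cells is identified \emph{together with its prescribed Minkowski decomposition}; this is what pins the type of the intersection cell down to $A\cap B$ and avoids any appeal to uniqueness of Minkowski decompositions. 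Everything else is routine bookkeeping between types and their Minkowski cells.
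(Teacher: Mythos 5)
Your route is genuinely different from the paper's: the paper argues combinatorially, observing that $\compgr A{A\cap B}$ is a subgraph of the comparability graph $\compgr AB$, which is acyclic for two cells of a \mixsd, and then invoking Lemma \ref{lem:refinement_iff_acyclic} to conclude that $A\cap B$ is a refinement of $A$ and hence (by Lemma \ref{lem:mixsd_refinements}) a type of a face. You instead work geometrically straight from Definition \ref{def:mixed_sd}(2), which has the attraction of not presupposing the comparability property for cells of a \mixsd at all. Your first two steps are fine: condition (2) applied to $\mc C_A,\mc C_B$ together with the simplex identity $F_{A_i}\cap F_{B_i}=F_{A_i\cap B_i}$ gives $\mc C_A\cap\mc C_B=\sum_{i=1}^n F_{A_i\cap B_i}$, and the case split on an empty position is correct.

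The gap is in the last step. The type of a cell is part of the \emph{data} of the mixed subdivision (condition (1) fixes a decomposition for each cell), and the underlying polytope does not determine it: for instance $(12,3)$ and $(3,12)$ are distinct $(2,3)$-types with the same Minkowski cell, which is exactly why the paper warns that ``the order of the summands does matter.'' Condition (2) is only a set-theoretic equality about $\mc C_A\cap\mc C_B$; it does not ``prescribe'' the decomposition of the intersection cell, contrary to your closing claim. So your argument shows that some cell $C$ of $S$ has underlying set $\mc C_{A\cap B}$, but you still owe the identification of its assigned type $D$ with $A\cap B$. This can be closed by applying condition (2) twice more, to the pairs $(C,\mc C_A)$ and $(C,\mc C_B)$: since $C\subseteq\mc C_A$ you get $\sum_i F_{D_i\cap A_i}=C=\sum_i F_{D_i}$, and because for each coordinate $j$ the maximum of $x_j$ over $\sum_i F_{Y_i}$ equals $\card\{i\mid j\in Y_i\}$, two coordinatewise nested tuples with equal Minkowski sums must coincide; hence $D_i\subseteq A_i$ for all $i$, likewise $D_i\subseteq B_i$, and then the same counting applied to $\sum_i F_{D_i}=\sum_i F_{A_i\cap B_i}$ with $D_i\subseteq A_i\cap B_i$ forces $D=A\cap B$. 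With that supplement your proof is complete and is arguably more self-contained than the paper's, at the price of this decomposition-rigidity argument which the paper's refinement-based proof gets for free.
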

\begin{proof}
Let $A,B$ be two types that intersect non-trivially in every position.
It is an easy exercise to verify that $\compgr A{A\cap B}$ is a subgraph of $\compgr AB$.  Hence $\compgr A{A\cap B}$ is acyclic since $\compgr AB$ is so. By Lemma \ref{lem:refinement_iff_acyclic} this implies that $A\cap B$ is a type.
\end{proof}

\begin{lemma}\label{lem:mixsd_refinements}
Given a Minkowski cell $Q=\sum_{i=1}^k F_i$ in a \dnmixsd then the faces of $Q$ are exactly the $\mc C_R$ where $R$ is a refinement of $\mc T_Q$.
\end{lemma}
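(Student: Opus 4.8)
The plan is to translate the geometry of faces of the Minkowski sum $Q=\sum_{i=1}^{k}F_i$ (here $k=n$, since $\mc T_Q$ is an $(n,d)$-type) into the combinatorics of refinements, via the standard fact that a face of a Minkowski sum is the Minkowski sum of the faces of the summands cut out by a \emph{common} linear functional. Concretely, for $c\in\R^d$ write $F^c$ for the face of a polytope $F$ on which $\langle c,\cdot\rangle$ is minimised. Then for every $c$ one has $Q^c=\sum_{i=1}^{k}F_i^c$, and every non-empty face of the polytope $Q$ arises as $Q^c$ for some $c$. So it suffices to describe $F_i^c$ for each simplex face $F_i=\mathrm{conv}\{e_j:j\in F_i\}$ and to match the resulting sums with the cells $\mc C_R$.

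First I would record the elementary description of $F_i^c$: since $F_i$ is the face of $\nsimplex{d-1}$ spanned by the unit vectors indexed by $F_i\subseteq[d]$, the minimiser face $F_i^c$ is the simplex face spanned exactly by those $j\in F_i$ for which $c_j$ is smallest among $\{c_\ell:\ell\in F_i\}$. Hence $Q^c=\sum_i F_i^c=\mc C_{R(c)}$, where $R(c)$ is the $(n,d)$-type with $R(c)_i=\{j\in F_i:c_j=\min_{\ell\in F_i}c_\ell\}$. Next I would identify $R(c)$ as a refinement of $\mc T_Q$: given $c$, let $P=P(c)=(P_1,\ldots,P_\ell)$ be the ordered partition of $[d]$ grouping the coordinates by their $c$-value with blocks ordered by increasing value. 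Then the smallest block index $m(i)$ meeting $F_i$ is precisely the block of minimal $c$-value occurring in $F_i$, so $R(c)_i=F_i\cap P_{m(i)}=(\restr{\mc T_Q}{P})_i$; that is, $R(c)=\restr{\mc T_Q}{P(c)}$ is a refinement. This gives that every face of $Q$ has the form $\mc C_R$ with $R$ a refinement of $\mc T_Q$.

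For the reverse inclusion, given an arbitrary ordered partition $P=(P_1,\ldots,P_\ell)$ of $[d]$ I would exhibit a functional realising it — for instance $c_j\coloneq a$ whenever $j\in P_a$ — so that $P(c)=P$ and hence $\mc C_{\restr{\mc T_Q}{P}}=Q^c$ is genuinely a face of $Q$; the trivial one-block partition recovers $\mc C_{\mc T_Q}=Q$ itself. I expect the only real care to be needed in the bookkeeping of the dictionary $c\leftrightarrow P(c)$: namely checking that the earliest block meeting $F_i$ in the refinement coincides with the set of $c$-minimisers on $F_i$ (immediate once blocks are ordered by increasing $c$-value), and that every ordered partition is realised by some functional so that no refinement is missed. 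The underlying Minkowski-sum face lemma and the simplex-face computation are routine, so the substance of the proof is exactly this correspondence between linear functionals and ordered partitions, which is precisely the refinement operation.
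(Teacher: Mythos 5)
Your proposal is correct, but it takes a different route from the paper, which disposes of this lemma in one line by citing \cite[Proposition~6.4]{Ardila/Develin}; in effect you have reproved that proposition from scratch. Your argument is the standard normal-fan computation: every face of $Q=\sum_i F_i$ is $Q^c=\sum_i F_i^c$ for some linear functional $c$, the minimiser face of a simplex face $F_i$ is spanned by the $c$-minimal coordinates in $F_i$, and the dictionary $c\leftrightarrow P(c)$ (grouping coordinates of $c$ by value, blocks ordered increasingly) identifies $R(c)$ with $\restr{\mc T_Q}{P(c)}$; conversely each ordered partition $P$ is realised by the functional $c_j=a$ for $j\in P_a$, so every refinement cell genuinely occurs as a face. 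All the steps check out, including the two directions of the set equality (note the lemma only claims a set equality of faces, so you rightly do not need injectivity of $R\mapsto\mc C_R$, which can in fact fail for permuted summands). Two observations on the comparison: first, your proof is self-contained and makes explicit that the refinements of $\mc T_Q$ are precisely the faces of the common refinement of the normal fans of the summands, which is conceptually clarifying; second, your argument nowhere uses that $Q$ sits inside a \mixsd, so it actually proves the statement for an arbitrary Minkowski cell, slightly more general than the phrasing of the lemma -- the same generality implicit in the cited Ardila--Develin result. The trade-off is only length versus reliance on the literature; mathematically there is no gap.
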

\begin{proof}
This follows directly from \cite[Proposition 6.4]{Ardila/Develin}.
\end{proof}

\begin{lemma}\label{lem:mixsd_comparable}
Let $A,B$ be $(n,d)$-types such that $\compgr AB$ is acyclic. Then $\mc C_A\cap \mc C_B=\mc C_{A\cap B}$.
\end{lemma}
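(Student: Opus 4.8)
The plan is to realise $\mc C_A$ and $\mc C_B$ as cells of one common \mixsd and then read off their intersection from the defining property of \mixsds. Write $\mc C_A=\sum_{i=1}^n F_i$ and $\mc C_B=\sum_{i=1}^n G_i$, where $F_i$ and $G_i$ denote the faces of $\nsimplex{d-1}$ spanned by $\{e_j\mid j\in A_i\}$ and $\{e_j\mid j\in B_i\}$. The inclusion $\mc C_{A\cap B}\subseteq\mc C_A\cap\mc C_B$ is immediate from the monotonicity of the Minkowski sum, since the face corresponding to $A_i\cap B_i$ is contained in both $F_i$ and $G_i$; it needs no hypothesis on $\compgr AB$. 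The content of the lemma is therefore the reverse inclusion, and this is exactly where acyclicity must enter: for the ``crossing'' types $A=(1,2)$ and $B=(2,1)$, whose comparability graph has a directed $2$-cycle, both cells equal the single point $e_1+e_2$, whereas $A\cap B$ has two empty positions, so equality fails.

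Since $\compgr AB$ is acyclic, the remark following Lemma \ref{lem:refinement_iff_acyclic} provides a \tom $M$ containing both $A$ and $B$. By \cite[Theorem 6.3]{Ardila/Develin} the Minkowski cells $\{\mc C_C\mid C\in M\}$ form a \mixsd $S$ of $\dilsimp n{d-1}$, so in particular $\mc C_A$ and $\mc C_B$ are cells of one and the same \mixsd. Now I would invoke condition~(2) of Definition \ref{def:mixed_sd}, which for these two cells of $S$ reads
\[\mc C_A\cap\mc C_B=(F_1\cap G_1)+\cdots+(F_n\cap G_n).\]
Finally, two faces of a simplex always meet in the face spanned by their common vertices, so $F_i\cap G_i$ is precisely the face corresponding to $A_i\cap B_i$ (and is empty when $A_i\cap B_i=\emptyset$). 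Substituting this into the displayed identity gives $\mc C_A\cap\mc C_B=\sum_i(F_i\cap G_i)=\mc C_{A\cap B}$, with the convention that an empty position makes both sides empty.

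The main obstacle is the step that embeds $A$ and $B$ into a common \mixsd: without it the Minkowski-sum identity of Definition \ref{def:mixed_sd}(2) is simply false, as the crossing example shows, and it is only through this embedding that the acyclicity hypothesis is used. A secondary point to be careful about is that condition~(2) is phrased for the cells of the subdivision, so one must justify applying it to $\mc C_A$ and $\mc C_B$ even when these are not top-dimensional; this is covered by the remark after Definition \ref{def:mixed_sd} that the notion extends to cells which are not full-dimensional (alternatively, one passes to top-dimensional cells of $S$ having $\mc C_A$ and $\mc C_B$ as faces and intersects those, using that faces of Minkowski cells are again Minkowski cells by Lemma \ref{lem:mixsd_refinements}).
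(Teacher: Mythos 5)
Your argument is correct in the sense that every step is licensed by statements available in the paper, but it takes a genuinely different route from the paper's own proof. The paper argues directly and self-containedly: both cells are convex hulls of integral points of $\dilsimp n{d-1}$, the inclusion $\mc C_{A\cap B}\subseteq\mc C_A\cap\mc C_B$ is immediate, and for the converse one picks an integral point $p\in\mc C_A\cap\mc C_B$ with representing types $p_A\subseteq A$ and $p_B\subseteq B$; if $p_A\not\subseteq B$, then $p_B$ is a permutation of $p_A$, so $\compgr{p_A}{p_B}$ contains a directed cycle, and this cycle persists (with at least one edge still directed) in $\compgr{A}{B}$, contradicting acyclicity. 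No \tom, no appeal to \cite[Theorem 6.3]{Ardila/Develin}, and no mixed-subdivision machinery enters. Your route -- embed $A$ and $B$ in a common \tom, pass to the induced \mixsd, and read the claim off from condition (2) of Definition \ref{def:mixed_sd} -- is shorter and more conceptual; your handling of the easy inclusion, the crossing example $(1,2),(2,1)$, and the issue of non-maximal cells (where your fallback via Lemma \ref{lem:mixsd_refinements} is the safer of your two options, since the paper's note after Definition \ref{def:mixed_sd} concerns non-full-dimensional \emph{summands}, not lower-dimensional cells) is all sound.

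The caveat you should be aware of: your single load-bearing step is the remark after Lemma \ref{lem:refinement_iff_acyclic}, and that remark is \emph{asserted without proof} in the paper; its nontrivial direction (acyclicity of $\compgr{A}{B}$ implies a common \tom) does not follow from Lemma \ref{lem:refinement_iff_acyclic} in any obvious way. In particular, the natural attempt -- show $A$ and $B$ are both refinements of $A\cup B$, realise $A\cup B$ in some \tom, and finish by surrounding -- fails: for $A=(12,2)$ and $B=(1,1)$ the graph $\compgr{A}{B}$ is acyclic (its only edges are two parallel copies of $2\to1$), yet $A$ is not a refinement of $A\cup B=(12,12)$, whose only refinements are $(12,12)$, $(1,1)$ and $(2,2)$; so no common coarsening exists, although the two types do coexist in a realisable \tom. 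A genuine proof of the remark amounts to a simultaneous-realisability construction: choose $w\in\R^d$ with $w_j=w_k$ for each undirected edge and $w_j<w_k$ for each directed edge $j\to k$ of $\compgr{A}{B}$ (possible precisely because the graph is acyclic), take two points with difference $w$, and place the apexes of the $n$ \thps accordingly. If you supply this, your proof stands, and it is non-circular even in the application to Theorem \ref{thm:toprep1} (the auxiliary \tom is independent of the ambient data); without it, you have relocated the entire content of the lemma into an unproven aside, which is presumably why the paper opts for the elementary integral-point argument.
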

\begin{proof}
It is easy to see that the intersection of the cells $\mc C_A$ and $\mc C_B$ is always the convex hull of integral points (in the standard embedding into $\R^d$) in $\dilsimp n{d-1}$.
Moreover, it is clear that $\mc C_{A\cap B}\subseteq \mc C_A\cap \mc C_B$.

Conversely, let $p$ be an integral point in $\mc C_A\cap \mc C_B$.
Denote by $p_A\subseteq A$ a possible type of $p$ (which need not be a refinement of $A$), \ie $p_A$ is an $(n,d)$-type with $p=\mc C_{p_A}$.
We will now argue that then also $p_A\subseteq B$. So suppose this is not true.
Define $p_B$ similarly to $p_A$. Then $p_B$ is a permutation of $p_A$. Hence $\compgr{p_A}{p_B}$ contains a directed cycle $C$.

But then $C$ is also contained in $\compgr AB$ (where some directed edges in $\compgr{p_A}{p_B}$ may be undirected in $\compgr AB$). But since $p_A\not\subseteq B$ there is at least one directed edge. This contradicts the hypothesis that $\compgr AB$ is acyclic.
\end{proof}

We can define the concepts of \defn[deletion!in a \mixsd]{deletion} and \defn[contraction!in a \mixsd]{contraction} for \mixsds analogous to Definition \ref{def:tom_deletion_contraction}.
The following observations are immediate:
\begin{lemma}\label{lem:mixsd_deletion_contraction}
Let $S$ be a \dnmixsd.
\begin{enumerate}
\item For any $i\in[n]$ the deletion $\deletion Si$ is a \mixsd of $\dilsimp{(n-1)}{d-1}$.
\item For any $j\in[d]$ the contraction $\contraction Sj$ is a \mixsd of $\dilsimp{n}{d-2}$.
\end{enumerate}
\end{lemma}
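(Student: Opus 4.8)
The plan is to realise both operations as restrictions of the subdivision to a face of the subdivided polytope, and then invoke the standard fact that such a restriction is itself a subdivision. I would treat the contraction first, since there the relevant face is a facet of $\dilsimp n{d-1}$ itself. Writing $\dilsimp n{d-1}=\{x\in\R^d_{\geq0}:\sum_\ell x_\ell=n\}$ in the standard embedding, the set $G_j\coloneq\{x\in\dilsimp n{d-1}:x_j=0\}$ is a facet, and it coincides with the $n$-fold Minkowski sum of the facet of $\nsimplex{d-1}$ opposite the vertex $j$, \ie $G_j\isom\dilsimp n{d-2}$. Since the $j$-th coordinate of a Minkowski sum is the sum of the (nonnegative) $j$-th coordinates of the summands, a Minkowski cell $\mc C_A=\sum_i A_i$ is contained in $G_j$ \tiff every $A_i$ avoids $j$; these are exactly the types of $\contraction Sj$. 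As the cells of a subdivision that lie in a face form a subdivision of that face, $\contraction Sj$ is a \mixsd of $G_j\isom\dilsimp n{d-2}$, each cell being a Minkowski sum of faces of $\nsimplex{d-2}$.

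For the deletion the target is not a face of $\dilsimp n{d-1}$ itself, so I would pass through the Cayley trick. Under the bijection between \mixsds of $\dilsimp n{d-1}$ and subdivisions of $\simpprod{n-1}{d-1}$, the $n$ summands correspond to the $n$ vertices of the first factor $\nsimplex{n-1}$. Restricting a cell of type $A=(A_1,\ldots,A_n)$ to the facet $\nsimplex{n-2}\times\nsimplex{d-1}$ of $\simpprod{n-1}{d-1}$ obtained by deleting the $i$-th vertex of the first factor forgets the $i$-th summand and produces the cell of type $(A_1,\ldots,\omitted{A_i},\ldots,A_n)$ -- precisely the combinatorial deletion. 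Using again that the restriction of a subdivision to a facet is a subdivision (now of $\simpprod{n-2}{d-1}$) and translating back through the Cayley trick, $\deletion Si$ is a \mixsd of $\dilsimp{(n-1)}{d-1}$.

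The only genuine point to verify -- and the step I would be most careful about -- is the identification, in the deletion case, of the combinatorial operation ``delete coordinate $i$'' with the geometric ``restrict to the facet where the $i$-th first-factor coordinate vanishes''. This is exactly the type-bookkeeping of the Cayley trick: the $i$-th first-factor vertex records the $i$-th summand $A_i$, so passing to the opposite facet erases it. Once this dictionary is in place, both assertions reduce to the elementary fact that a subdivision induces a subdivision on each of its faces, which is what the lemma's phrasing as an immediate observation reflects.
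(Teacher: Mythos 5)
Your proof is correct; for the contraction it is essentially the paper's own argument, while for the deletion it takes a self-contained route where the paper defers to a citation. The paper's proof of part (2) likewise identifies $\contraction Sj$ with the subdivision induced by $S$ on the $j$-th facet of $\dilsimp n{d-1}$ (the facet opposite the vertex $(j,\ldots,j)$, \ie your $G_j$); your verification that a Minkowski cell lies in this facet \tiff its type avoids $j$ in every coordinate is exactly the detail the paper leaves implicit. For part (1) the paper simply cites Santos \cite[Lemma 2.1]{Santos03}, which asserts that deleting a summand from a \mixsd again yields a \mixsd, whereas you rederive this via the Cayley trick, identifying the deletion with the restriction of the corresponding subdivision of $\simpprod{n-1}{d-1}$ to the facet $\simpprod{n-2}{d-1}$ on which the $i$-th first-factor coordinate vanishes. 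Your type bookkeeping there is sound: the Cayley cell $\mathrm{conv}\bigl(\bigcup_k \{e_k\}\times F_k\bigr)$ of a type $A$ meets that facet precisely in $\mathrm{conv}\bigl(\bigcup_{k\neq i} \{e_k\}\times F_k\bigr)$, and since every coordinate of a type is non-empty, every cell of $S$ contributes, so the restriction consists exactly of the Cayley cells of the types $(A_1,\ldots,\omitted{A_i},\ldots,A_n)$, \ie of $\deletion Si$. What your route buys is uniformity: both parts reduce to the single standard fact that a subdivision induces a subdivision on each face of the subdivided polytope, and the Cayley-trick translation automatically preserves the Minkowski intersection condition of Definition \ref{def:mixed_sd}. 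The one point to flag is that you need the Cayley trick for arbitrary, not only regular, subdivisions; this non-regular version is due to Santos -- in fact the same source \cite{Santos03} the paper's proof invokes -- so the two arguments ultimately rest on the same foundation, yours unpacking what the paper cites.
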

\begin{proof}$ $
\begin{enumerate}
\item  This follows immediately from Santos \cite[Lemma 2.1]{Santos03}.
\item The contraction $\contraction Sj$ is the subdivision of the $j$-th facet of $\dilsimp n{d-1}$ (\ie the facet opposite to the vertex $(j,\ldots,j)$) induced by $S$. Hence $\contraction Sj$ is a \mixsd.\qedhere
\end{enumerate}
\end{proof}

\smallskip
There is a standard embedding of a \dnmixsd into $\R^d$ (by mapping a tope $v$ to $(x_1,\ldots,x_d)$ where $x_i$ is the number of occurences of $i$ in $v$). We thus regard a \mixsd\ -- or any subset of its (open) cells -- as a metric  space with the Euclidean metric inherited from $\R^d$. The following is immediate:

\begin{lemma}\label{lem:deletion_embedding}
Let $S$ be a \dnmixsd, $i\in[n],j\in[d]$. Let $X$ be the subcomplex of $S$ of all cells $A$ such that $A_i=j$. Then $X$ is embedded isometrically into the deletion $\deletion Si$.
\end{lemma}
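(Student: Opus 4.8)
The plan is to exhibit the asserted isometric embedding explicitly as a translation in the ambient space $\R^d$. First I would verify that $X$ is genuinely a subcomplex of $S$, so that talking about an isometric embedding of $X$ is meaningful. By Lemma \ref{lem:mixsd_refinements} the faces of a cell $\mc C_A$ are exactly the cells $\mc C_R$ for $R$ a refinement of $A$. If $A_i=\{j\}$ and $P=(P_1,\ldots,P_\ell)$ is an ordered partition of $[d]$, then $R_i=A_i\cap P_{m(i)}=\{j\}$, since $j$ lies in exactly one block of $P$ and that block is the one selected by $m(i)$. Hence every refinement of a type in $X$ again has its $i$-th coordinate equal to $\{j\}$, so $X$ is closed under passing to faces.

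Next I would introduce the candidate map $\Phi\colon X\to\deletion Si$ on cells by deleting the $i$-th coordinate, that is $\Phi(\mc C_A)=\mc C_{\deletion Ai}$ where $\deletion Ai=(A_1,\ldots,\widehat{A_i},\ldots,A_n)$. By the very definition of the deletion this image lies in $\deletion Si$, and $\Phi$ is injective on $X$: if $A,B\in X$ agree after deleting coordinate $i$, then they agree in all coordinates $k\neq i$ and satisfy $A_i=B_i=\{j\}$, so $A=B$.

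The heart of the argument is the comparison of the two standard embeddings into $\R^d$. Recalling that $\mc C_A=\sum_{k=1}^n\operatorname{conv}\{e_m\mid m\in A_k\}$ realises $S$ inside the hyperplane $\{\sum x=n\}$, while $\deletion Si$ sits analogously inside the parallel hyperplane $\{\sum x=n-1\}$, I would observe that for $A\in X$ the $i$-th Minkowski summand is the single point $\{e_j\}$. Consequently $\mc C_A=e_j+\mc C_{\deletion Ai}$ as subsets of $\R^d$. Thus on $X$ the map $\Phi$ is precisely the restriction of the translation by $-e_j$, an isometry of $\R^d$; since both $X$ and its image carry the Euclidean metric inherited from $\R^d$, this shows that $\Phi$ is an isometric embedding.

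I expect no serious obstacle here, which is why the lemma is flagged as immediate: all the content is packed into the translation identity $\mc C_A=e_j+\mc C_{\deletion Ai}$. The only points demanding care are confirming that $X$ is closed under taking faces and keeping careful track of the fact that $S$ and its deletion live in parallel affine hyperplanes of $\R^d$ differing by exactly the unit vector $e_j$.
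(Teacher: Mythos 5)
Your proof is correct and is essentially the argument the paper intends: the paper states this lemma without proof (``the following is immediate''), and the translation identity $\mc C_A=e_j+\mc C_{\deletion Ai}$ for cells with $A_i=\{j\}$, comparing the standard embeddings of $S$ and $\deletion Si$ into $\R^d$, is exactly the observation that makes it immediate. Your supporting checks --- that $X$ is closed under refinements (via Lemma \ref{lem:mixsd_refinements}) and that the deletion map is injective on $X$ --- are the right details to spell out.
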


\subsection{Reconstructing Mixed Subdivisions} 
In this section we prove the following:
\begin{proposition}\label{prop:reconstruct_mixsd}
Let $S$ be a \dnmixsd. Then $S$ can be reconstructed from its topes.

More precisely, the  cells of $S$ are exactly the unions of topes all of whose total refinements are topes and which do not contain any other tope.
\end{proposition}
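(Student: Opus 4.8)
The plan is to show both inclusions of the asserted description, writing $\mc C_A$ for the Minkowski cell of an $(n,d)$-type $A$. The basic structural input is Lemma~\ref{lem:mixsd_refinements}: the faces of $\mc C_A$ are exactly the cells $\mc C_R$ with $R$ a refinement of $A$, so its topes (its $0$-dimensional faces) are precisely the $\mc C_R$ with $R$ a \emph{total} refinement of $A$. Choosing for each $a\in A_i$ an ordered partition of $[d]$ beginning with $\{a\}$ shows that every element of $A_i$ occurs as the $i$-th entry of some total refinement, so the coordinatewise union of the total refinements of $A$ is again $A$; thus $\mc C_A$ is the union of its topes. The \emph{easy inclusion} is then immediate: if $C=\mc C_A$ is a cell of $S$, it is the union of its topes, all total refinements of $A$ are topes of $S$, and, since $S$ is a polytopal complex, any $0$-cell of $S$ lying in $C$ is a face of $C$, hence one of these total refinements; so $C$ contains no further tope.

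For the \emph{hard inclusion}, let $A$ be the coordinatewise union of a set of topes of $S$ such that (i) every total refinement of $A$ is a tope of $S$ and (ii) $\mc C_A$ contains no tope of $S$ other than its vertices; I would prove $\mc C_A\in S$ by induction on $m=\dim\mc C_A$. For $m=0$ the cell $\mc C_A$ is itself a tope and lies in $S$ by (i). For the step, I first check that every facet $\mc C_{A'}$ of $\mc C_A$ satisfies the same hypotheses: by Lemma~\ref{lem:mixsd_refinements} it corresponds to a refinement $A'$ of $A$, its total refinements are total refinements of $A$ and hence topes by (i), and the topes of $S$ in $\mc C_{A'}\subseteq\mc C_A$ are by (ii) exactly the vertices of $\mc C_A$ lying on $\mc C_{A'}$, i.e. the vertices of $\mc C_{A'}$. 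By the inductive hypothesis each facet — and hence all of $\bnd\mc C_A$ — is a union of cells of $S$.

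Next I pick a generic point $p$ in the relative interior of $\mc C_A$ and let $\mc C_B\in S$ be the cell with $p\in\mathrm{relint}\,\mc C_B$; genericity forces $\dim\mc C_B\ge m$. If some vertex $v$ of $\mc C_B$ lay outside $\mc C_A$, the half-open segment $[p,v)\subseteq\mathrm{relint}\,\mc C_B$ would cross $\bnd\mc C_A$ in the relative interior of some facet $\mc C_{A'}$; but $\mc C_{A'}\in S$ by the previous step, and distinct cells of $S$ have disjoint relative interiors, so $\mc C_B=\mc C_{A'}$, contradicting $v\notin\mc C_A$ (and the dimensions). Hence $\mc C_B\subseteq\mc C_A$, so $\dim\mc C_B=m$ and, by (ii), the vertices of $\mc C_B$ are among those of $\mc C_A$.

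It remains to exclude $\mc C_B\subsetneq\mc C_A$, and this is the step I expect to be the main obstacle. If the inclusion were proper, $\mc C_B$ would have a facet $\mc C_{B'}$ meeting $\mathrm{relint}\,\mc C_A$, with a second full-dimensional cell $\mc C_{B''}\subseteq\mc C_A$ across it; since these are cells of $S$, $\compgr B{B''}$ is acyclic, so by Lemma~\ref{lem:mixsd_comparable} their shared facet is $\mc C_{B'}=\mc C_{B\cap B''}$, an interior Minkowski wall all of whose vertices are (by (ii)) vertices of $\mc C_A$. The crux is an \emph{indivisibility} property: a Minkowski cell admits no proper internal subdivision into Minkowski cells without creating a tope in its relative interior — already visible for a parallelogram $\nsimplex{1}+\nsimplex{1}$, only one of whose two diagonal halves is a Minkowski sum of faces. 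I would establish this using the face description of Lemma~\ref{lem:mixsd_refinements}, the intersection property of \mixsds (Definition~\ref{def:mixed_sd}(2)) and Lemma~\ref{lem:mixsd_comparable}, passing if needed to the deletion and contraction of Lemma~\ref{lem:mixsd_deletion_contraction} to lower $n$ or $d$. Granting it, no such wall exists, whence $\mc C_B=\mc C_A$ and $\mc C_A\in S$, completing the induction.
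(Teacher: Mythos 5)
Your easy inclusion and the geometric frame of the hard one --- induction on $\dim\mc C_A$, the check that facets of $\mc C_A$ inherit hypotheses (i) and (ii), and the generic-point/segment argument producing a cell $\mc C_B\in S$ with $\mc C_B\subseteq\mc C_A$ and $\dim\mc C_B=\dim\mc C_A$ --- are sound, modulo routine general-position bookkeeping (and modulo the fact that the cell $\mc C_{B''}$ across your interior wall needs the same segment argument you ran for $\mc C_B$ before you may assert $\mc C_{B''}\subseteq\mc C_A$). But the proof does not close: the step you yourself flag, the \emph{indivisibility} claim that a Minkowski cell admits no proper internal subdivision into Minkowski cells without a tope appearing in its relative interior, is exactly the mathematical content of the proposition at the decisive point, and you only ``grant'' it. It is not a formal consequence of Lemma \ref{lem:mixsd_refinements}, Definition \ref{def:mixed_sd}(2) and Lemma \ref{lem:mixsd_comparable}; genuinely new combinatorial input is needed, and its full strength is so close to the statement being proved that deferring it leaves the induction without an engine. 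Your sketch of how deletion and contraction would enter also omits the one fact that makes such reductions legitimate, namely Lemma \ref{lem:deletion_embedding}, which guarantees that deleting a singleton position embeds the relevant subcomplex isometrically so that hypotheses (i) and (ii) survive.

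For comparison, the paper supplies precisely this missing engine, and does so combinatorially rather than by subdivision rigidity. After contracting the letters of $[d]$ absent from $A$ and deleting the singleton positions of $A$ (justified by Lemma \ref{lem:deletion_embedding}), it shows that $A$ is comparable with \emph{every} cell $B\in S$: a directed cycle $(1,2,\ldots,k,1)$ in $\compgr BA$ with $k<d$ would persist in $\compgr B{A'}$ for the proper refinement $A'=\restr AP$ with $P=([k],k+1,\ldots,d)$, which by induction is already a cell of $S$ --- impossible, since types of cells in a \mixsd pairwise have acyclic comparability graphs (as used in Lemma \ref{lem:intersection_of_types}). Hence any such cycle is Hamiltonian, which, because $A$ has no singleton positions, pins down $A_i=\{i,(i+1)\bmod d\}$ and $B_i\in\bigl\{\{i\},\{i,(i+1)\bmod d\}\bigr\}$ with at least one singleton, say $B_d=\{d\}$; refining $B$ by the singleton partition $(1,2,\ldots,d)$ then yields the tope $T=(1,2,\ldots,d)$, contained in $A$ but not a refinement of $A$, contradicting hypothesis (ii). This explicit construction of the forbidden tope is exactly what your indivisibility property is supposed to manufacture; once comparability with all cells is known, Lemma \ref{lem:mixsd_comparable} gives that $\mc C_A$ meets every cell of $S$ in a common face and $\mc C_A\in S$ follows, with no rigidity statement about subdivisions ever needed. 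Unless you can actually prove your indivisibility lemma, your argument has a genuine gap at its crux.
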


We call types satisfying the conditions above the \defn[nice type in a \mixsd]{nice} types of $S$. \Ie an $(n,d)$-type $A$ is \mbox{nice if}
\begin{itemize}
\item $A$ is a (componentwise) union of topes of $S$,
\item all total refinements of $A$ are topes of $S$, and
\item if $T$ is a tope of $S$ such that $T\subseteq A$ then $T$ is a refinement of $A$.
\end{itemize}  
If $A$ is a nice type we call the Minkowski cell $\mc C_A$ corresponding to $A$ a \defn[]{nice cell}.

Note that it is crucial to consider the topes of $S$ as types rather than as mere coordinates; \ie the order of the summands does matter.

Also note that the equivalent result for \toms, namely that a \tom is uniquely determined by its topes, is proven in \cite{Ardila/Develin}. Their proof, however, uses the elimination property.

\begin{figure}[b]
\centering
\begin{tikzpicture}[scale=1.3]
\coordinate (0) at (0,0);
\node[above] at (0.north) {\footnotesize $T$};
\foreach \i in {1,2,3,4,5,6}{
	\coordinate (\i) at (60*\i:1);
};

\draw[dashed] (6)--(0)--(4);
\fill[fill=white!90!black] (6)--(0)--(4)--(5)--cycle;
\draw (6)--(1)--(2)--(3)--(4)--(5)--cycle;

\end{tikzpicture}
\caption[Illustration for the proof of Proposition \ref{prop:reconstruct_mixsd}.]{A (hexagonal) Minkowski cell $A$ of type $(12,23,13)$ and a cell $B$ of type $(12,2,13)$ as in the proof of Proposition \ref{prop:reconstruct_mixsd}. Then $A\supset B$ and there is a tope $T=(1,2,3)$ of $B$ that lies in the interior \mbox{of $A$}.}
\label{fig:reconstruct_mixsd}
\end{figure}
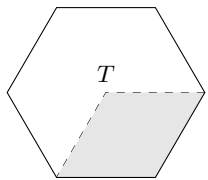

\begin{proof}
Let $S$ be a \dnmixsd.
It is clear that all cells of $S$ are nice. So it remains to prove that every nice type does indeed yield a cell of $S$.

The general strategy is the following: Assume that a cell $A$ corresponds to a nice type of $S$. We proceed via induction over $\dim A$. If $\dim A=0$ then it is clear that $A$ is a cell of $S$ (namely a vertex). Thus, we may assume that $\dim A\geq1$ and that every proper refinement of $A$ is a cell.

We will argue that $A$ intersects every cell $B$ of $S$ either not at all or in a common face of $A$ and $B$, proving that $A$ is in fact a cell in $S$.

\medskip
We may \twlog assume that $A$ contains all elements of $[d]$. Otherwise  form contractions of $S$ for each element of $[d]$ that is not contained in $A$.
Moreover, we may assume that $A$ does not contain any singleton position. Otherwise form the deletion of $S$ for every singleton position. By Lemma \ref{lem:deletion_embedding} $A$ embeds isometrically into this deletion.

Now let $B$ be a cell in $S$. By Lemma \ref{lem:mixsd_comparable} it suffices to prove that $A$ and $B$ are comparable. So suppose on the contrary that  $\compgr BA$ has a directed cycle.

Assume \twlog that this cycle is $C=(1,2,\ldots,k,1)$, directed in this order.
Let $P=([k],k+1,\ldots,d)$ be an ordered partition of $[d]$. Define $A'\coloneq\restr AP$. Since $A$ does not have any singleton positions, $\dim A'<\dim A$ if $k<d$ and hence $A'$ is a proper refinement of $A$. Moreover, $\compgr B{A'}$ also contains the cycle $C$. This is a contradiction. 

Thus, $k=d$. Assume \twlog that $B_i\ni i$ and $A_i\ni (i+1)\bmod d$ for each $i$. Since $A$ does not have any singleton positions this implies that $A_i=\{i,i+1\bmod d\}$ for each $i$. Moreover, $B_i=\{i\}$ if there is a directed edge $i\to (i+1\bmod d)$ and $B_i=\{i,i+1\bmod d\}$ if the edge is undirected. Thus, we have completely determined $A$ and $B$.

Since the cycle is directed, there is a singleton in $B$. Assume \twlog that $B_d=\{d\}$. Let $P=(1,2,\ldots,d)$ be an ordered partition of $[d]$ into singletons. Then $T\coloneq\restr BP=(1,2,\ldots,d)$. Hence $T$ is a tope in $S$. But $T$ is contained in $A$ and not a refinement of $A$. This contradicts the choice of $A$. See Figure \ref{fig:reconstruct_mixsd} for an illustration.
\end{proof}

Since in a \emph{fine} \mixsd the type graph of every type is acyclic, we get the following:
\begin{corollary}
Let $S$ be a fine \dnmixsd. Then the (type graphs of the) cells of $S$ are exactly the acyclic unions of (the type graphs of) topes all of whose total refinements are again topes.
\end{corollary}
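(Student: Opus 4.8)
The plan is to deduce this directly from Proposition \ref{prop:reconstruct_mixsd}. That result already characterises the cells of any \dnmixsd $S$ as the \emph{nice} types, and two of the three defining conditions of a nice type -- being a componentwise union of topes, and having all total refinements among the topes of $S$ -- appear verbatim in the corollary. So the task reduces to showing that, in the presence of these two conditions and under the hypothesis that $S$ is fine, the remaining condition ``every tope $T\subseteq A$ is a refinement of $A$'' is equivalent to the type graph $K_A$ being acyclic (\ie a forest).

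For the forward inclusion I would argue that every cell of a fine $S$ has an acyclic type graph. A Minkowski cell $B=\sum B_i$ satisfies the fine criterion $\dim B=\sum\dim B_i$ precisely when $K_B$ is a forest: the edges of $K_B$ number $\sum_i|B_i|=\sum_i(\dim B_i+1)$, while its vertex count is $n+|\bigcup_i B_i|$ and its number of connected components together with $|\bigcup_i B_i|$ records $\dim B$. Comparing these expressions shows that Santos's criterion (\cf the discussion after Definition \ref{def:mixed_sd}) is exactly the relation $\#E=\#V-\#\text{components}$ characterising forests. Since $S$ is fine, every one of its cells is therefore acyclic, which is the acyclicity half of the claimed description.

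The substance of the corollary is the converse: an acyclic union of topes all of whose total refinements are topes is automatically nice, \ie it cannot contain a tope of $S$ that fails to be one of its refinements. Here I would invoke Lemma \ref{lem:refinement_iff_acyclic}: for a singleton type $T\subseteq A$, being a refinement of $A$ is equivalent to $\compgr TA$ being acyclic. Since $T_i\cap A_i=T_i=\{t_i\}$, every non-loop edge of $\compgr TA$ is a \emph{directed} edge $t_i\to k$ with $k\in A_i\setminus\{t_i\}$. A directed cycle of $\compgr TA$ hence visits distinct elements $t_{i_0},\ldots,t_{i_{m-1}}$ of $[d]$ with $t_{i_{l+1}}\in A_{i_l}$ cyclically, and lifting each step to the incident edges $N_{i_l}D_{t_{i_l}}$ and $N_{i_l}D_{t_{i_{l+1}}}$ of $K_A$ produces a genuine cycle in $K_A$. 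So if $K_A$ is acyclic, no such directed cycle exists; every singleton $T\subseteq A$, and in particular every tope of $S$ contained in $A$, is a refinement of $A$. The three conditions of Proposition \ref{prop:reconstruct_mixsd} are then met, and $A$ is a cell of $S$.

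The step I expect to be the main obstacle is the cycle-lifting in the last paragraph: one must verify that a directed cycle in $\compgr TA$ yields a \emph{simple} cycle in the bipartite graph $K_A$, rather than a closed walk that might be cycle-free. This is where passing to a minimal directed cycle -- which forces the elements $t_{i_l}$, and hence the positions $i_l$, to be distinct -- is essential. By contrast, the edge/vertex/component bookkeeping in the forward direction is routine once the counting identity is set up correctly.
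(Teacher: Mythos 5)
Your proof is correct and takes essentially the same route as the paper, which deduces the corollary directly from Proposition \ref{prop:reconstruct_mixsd} together with the remark (stated there without proof) that in a fine mixed subdivision every type graph is acyclic. You additionally fill in the two details the paper leaves implicit --- the edge/vertex/component count identifying Santos's fineness criterion $\dim B=\sum\dim B_i$ with $K_B$ being a forest, and the cycle-lifting argument via Lemma \ref{lem:refinement_iff_acyclic} showing that acyclicity of $K_A$ forces every singleton type contained in $A$ (in particular every tope of $S$ contained in $A$) to be a refinement --- and both of these arguments are sound, including the passage to a minimal directed cycle that guarantees the lifted closed walk in $K_A$ is a genuine simple cycle.
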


\bigskip
For $i\in[n]$ consider the \emph{deletion map} \[\deletion {\cdot\,}i:S\to \deletion Si: C\mapsto \deletion Ci=(C_1,\ldots,\omitted{C_i},\ldots, C_n)\] mapping each cell $C$ of $S$ to the cell obtained by omitting the $i$-th entry of $C$.

\begin{lemma}\label{lem:deletion_preimage_connected}
Let $S$ be a \dnmixsd, $i\in[n]$ and $A\neq B$ the types of cells $\mc C_A,\mc C_B\in S$ cells such that $\deletion Ai=\deletion Bi$.  Then $A\cup B$ is the type of a cell in $S$.
\end{lemma}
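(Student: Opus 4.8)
The plan is to show that $A\cup B$ is a \emph{nice} type in the sense of Proposition~\ref{prop:reconstruct_mixsd}; by that proposition this is exactly equivalent to $\mc C_{A\cup B}$ being a cell of $S$. Throughout I use that, since $\deletion Ai=\deletion Bi$ and $A\neq B$, the types $A$ and $B$ agree in every position $k\neq i$ and differ only in position $i$, so that $(A\cup B)_k=A_k=B_k$ for $k\neq i$ while $(A\cup B)_i=A_i\cup B_i$. I also use the comparability property of $S$, \ie that the comparability graph of any two cells of a \mixsd is acyclic (this is exactly what makes the proof of Lemma~\ref{lem:intersection_of_types} work).

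The first two niceness conditions are immediate. Since $A$ and $B$ are cells, each is the componentwise union of its total refinements, and these are topes of $S$; hence $A\cup B$ is the componentwise union of the topes of $A$ together with those of $B$, which settles the first condition. For the second, fix an ordering of $[d]$ and consider the corresponding total refinement of $A\cup B$. In position $i$ it selects the first element $p$ of $A_i\cup B_i$; if $p\in A_i$ then the whole tuple coincides with the total refinement $\restr AP$ of $A$, and if $p\in B_i$ it coincides with $\restr BP$. In either case it is a tope of $S$.

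The crux is the third condition: every tope $T$ of $S$ with $T\subseteq A\cup B$ must be a refinement of $A\cup B$, equivalently (Lemma~\ref{lem:refinement_iff_acyclic}) $\compgr T{A\cup B}$ must be acyclic. Since $T$ is a tope, every edge of $\compgr T{\,\cdot\,}$ is directed and emanates from one of the singletons $T_k$; in particular all position-$i$ edges start at the single vertex $t\coloneq T_i$. Thus in a simple directed cycle at most one position-$i$ edge can occur, and since $A\cup B$ agrees with both $A$ and $B$ off position $i$, a cycle avoiding position $i$ would already lie in $\compgr TA$, which is acyclic by comparability. Hence a hypothetical cycle uses exactly one position-$i$ edge $t\to l$ with $l\in(A_i\cup B_i)\setminus\{t\}$, together with a directed path $\pi$ from $l$ back to $t$ through positions $\neq i$.

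It remains to rule this out, and here both $A$ and $B$ enter. If $l\in A_i$, then $t\to l$ is also an edge in position $i$ of $\compgr TA$, so the whole cycle lies in $\compgr TA$ -- a contradiction. If $l\in B_i$, then $t\to l$ is an edge in position $i$ of $\compgr TB$ (whether or not $t\in B_i$, comparing the singleton $t$ with $B_i\ni l$ yields the directed edge $t\to l$), and since $\pi$ runs through positions $\neq i$ it also lies in $\compgr TB$; so the cycle lies in $\compgr TB$, contradicting the acyclicity of $\compgr TB$. The main obstacle is precisely this last step: it is not enough that $A$ is a cell, and the argument must invoke that $B$ is a cell as well, via the comparability property, in order to close the ``mixed'' cycle running partly along the extra edges contributed by $B_i$.
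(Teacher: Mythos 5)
Your proof is correct, and it follows the paper's overall scaffold---reducing the claim to the niceness conditions of Proposition~\ref{prop:reconstruct_mixsd} for $C=A\cup B$---but it handles the crucial step differently. The paper proves the two-way statement that the total refinements of $C$ are \emph{exactly} those of $A$ and $B$: one inclusion is the same case distinction on whether the element selected in position $i$ lies in $A_i$ or $B_i$ (your second condition), while the converse is obtained by reordering the defining ordered partition so that $v_i$ precedes all of $B_i\setminus A_i$, which is legitimate because $\compgr AB$ is acyclic. From that converse inclusion the third niceness condition follows implicitly: a tope $T\subseteq A\cup B$ satisfies $T_i\in A_i$ or $T_i\in B_i$, hence lies in the cell $A$ or $B$, hence (cells being nice) is a total refinement of that cell and therefore, by the converse inclusion, of $C$. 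The paper does not spell this deduction out, and you instead verify the third condition head-on: since $T$ is a tope, every edge of $\compgr T{A\cup B}$ is directed and emanates from a singleton of $T$, so a simple directed cycle uses at most one position-$i$ edge, and depending on whether its head lies in $A_i$ or in $B_i$ the whole cycle already sits inside $\compgr TA$ or inside $\compgr TB$, contradicting acyclicity. Both arguments rest on the same background fact---that comparability graphs of pairs of cells of a \mixsd are acyclic, which the paper likewise invokes without proof (e.g.\ in Lemma~\ref{lem:intersection_of_types})---you merely apply it to the pairs $(T,A)$ and $(T,B)$ where the paper applies it to $(A,B)$. Your route buys an explicit verification of the ``no stray topes'' condition that the paper leaves implicit; the paper's route buys slicker bookkeeping, since once the vertex sets of $C$ and of $A,B$ are identified, niceness of $C$ is inherited wholesale. (One harmless simplification on your side: you only consider total refinements induced by linear orders on $[d]$, but any total refinement induced by an ordered partition coincides with the one induced by any linear order refining that partition, so nothing is lost.)
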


\begin{proof}
Let $C\coloneq A\cup B$, \ie $C_i\coloneq A_i\cup B_i$ and $C_j=A_j(=B_j)$ for $j\neq i$. The situation is sketched in Figure \ref{fig:deletion}. The intuition is that  $C$ (unless it  already equals $A$ or $B$) is a prism over $A$ (or $B$) with $A$ and $B$ the top, respectively bottom face of $C$.

We need to show that $C$ is indeed a cell in $S$. To this end, we verify that $C$ satisfies the conditions from Proposition \ref{prop:reconstruct_mixsd}. This means we have to show that the total refinements of $C$ are exactly the total refinements of $A$ and $B$.

Indeed let $v=\restr CP$ be a total refinement of $C$ and assume \twlog that $v_i\in A_i$. Then $v=\restr AP$ is also a total refinement of $A$. 
Conversely, let $v=\restr AP$ be a total refinement of $A$. We may assume that in $P$ the element $v_i$ comes before all elements of $B_i\setminus A_i$. Otherwise we may change this order since $\compgr AB$ is acyclic. 
But then $\restr CP= v.$
Thus, every total refinement of $A$ or $B$ is also one of $C$. 
Hence $C$ is a type in $S$.
\end{proof}

\begin{figure}[h]
\centering
\includegraphics[width=3cm]{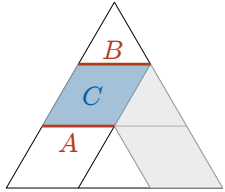}
\caption{The two edges $A$ and $B$ are mapped to the same cell under the deletion map that deletes the shaded cells.}
\label{fig:deletion}
\end{figure}

\section{The ``Tropological Representation Theorem''}\label{sec:toprep}
In this section we  formally introduce \tphps and prove a first version of the Topological Representation Theorem. 

\begin{definition}\label{def:tphp}
A \defn{tropical pseudohyperplane} is the image of a \thp under a PL-homeo\-mor\-phism of $\TP^{d-1}$ that fixes the boundary.
\end{definition}

The following theorem is a crucial ingredient to the proof of the Topological Representation Theorem.
In an arrangement of \thps, the $i$-th \thp consists exactly of those points $A$ with $\card A_i\geq 2$. We show that the analogue holds for the Poincaré dual of a \dnmixsd. We denote the dual cell of a cell $C\in S$ by $\dual C$.
See again Figure \ref{fig:mixsd2thp_b} for an example.

\begin{theorem}\label{thm:dual_phps}
Let $S$ be a \dnmixsd and $i\in [n]$. Then $\{\dual C\mid C\in S,\card{C_i}\geq2\}$ is a \tphp.
\end{theorem}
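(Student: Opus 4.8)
The plan is to realise $H_i\coloneq\{\dual C\mid C\in S,\ \card{C_i}\geq2\}$ as a PL-cone over its trace on $\partial\TP^{d-1}$ and to match this cone, while fixing the boundary, with the cone structure of a genuine \thp. I would argue by induction on the rank $d$. For $d=2$ the space $\TP^{1}$ is an interval, a \thp is a single interior point, and every \mixsd of $\dilsimp n1$ is fine and regular; one checks directly that the dual cells with $\card{C_i}\geq2$ assemble to a single interior point, and the identity (or a small PL-push) fixing the two boundary points exhibits it as a \tphp.

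For the inductive step I would first pin down $H_i$ combinatorially. Since a tope $T$ satisfies $\card{T_i}=1$, no tope lies on $H_i$; grouping the topes by the value $T_i\in[d]$ partitions the full-dimensional dual cells into $d$ classes, and I would show that the closures $\Sigma_j\coloneq\cl{\bigcup\{\dual C\mid C_i=\{j\}\}}$ are closed sectors that cover $\TP^{d-1}$, with $H_i=\bigcup_j\bnd\Sigma_j$ relative to the interior. Using Proposition \ref{prop:reconstruct_mixsd} together with Lemma \ref{lem:deletion_preimage_connected}, two adjacent top-dimensional dual cells $\dual T,\dual{\tilde T}$ are separated by a wall of $H_i$ \tiff $T$ and $\tilde T$ differ only in the $i$-th coordinate, \ie $\deletion Ti=\deletion{\tilde T}i$; then $T\cup\tilde T$ is an edge of $S$ with $\card{(T\cup\tilde T)_i}\geq2$, hence a cell of $H_i$. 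This identifies the face poset of $H_i$ and the sector-adjacency pattern with those of a genuine \thp.

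Next I would analyse the boundary. By Lemma \ref{lem:mixsd_deletion_contraction} each contraction $\contraction Sj$ is a \mixsd of $\dilsimp n{d-2}$, induced by $S$ on the $j$-th facet of $\TP^{d-1}\isom\nsimplex{d-1}$ (the facet $\isom\TP^{d-2}$ opposite $(j,\ldots,j)$). I would verify that $H_i(S)$ meets this facet exactly in $H_i(\contraction Sj)$, which by the induction hypothesis is a \tphp inside $\TP^{d-2}$. Since contractions in two coordinates commute, the resulting facet homeomorphisms agree along the codimension-$2$ strata where facets meet, so they glue to a single PL-homeomorphism of $\partial\TP^{d-1}$, fixing $\partial(\partial\TP^{d-1})$, carrying the boundary trace of a standard \thp onto the trace of $H_i$.

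Finally I would cone off from a central apex: a standard \thp is the PL-cone from its apex over its boundary trace, so it suffices to show that $H_i$ is likewise PL-homeomorphic, relative to the boundary, to the cone over its own boundary trace, and then extend the boundary homeomorphism radially. Establishing this coning is the main obstacle: for a non-regular $S$ the complex $H_i$ can be geometrically ``bent'' in the interior and cannot simply be read off as a fan. I would control it by showing each sector $\Sigma_j$ is a PL-$(d-1)$-ball --- using the isometric embedding of $\{A\mid A_i=j\}$ into $\deletion Si$ from Lemma \ref{lem:deletion_embedding} and the face description of Minkowski cells in Lemma \ref{lem:mixsd_refinements} to control the star/link structure --- and then assembling the sectorwise radial homeomorphisms into an ambient PL-homeomorphism of $\TP^{d-1}$ fixing the boundary. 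Checking that these local cone structures are compatible across the walls of $H_i$, so that the pieces patch to a single boundary-fixing PL-homeomorphism, is exactly the delicate point and the tropical counterpart of the hard direction in the classical Folkman--Lawrence argument.
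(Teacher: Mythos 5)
Your plan has a genuine gap at its core: the coning step that you yourself flag as ``the main obstacle'' is exactly the entire topological content of the theorem, and nothing in your outline supplies it. Knowing that each sector $\Sigma_j$ is a PL-ball (itself unproven here) would not yet give a boundary-fixing PL-homeomorphism of $\TP^{d-1}$ carrying a standard \thp onto $H_i$: one needs to control how $H_i$ sits inside $\TP^{d-1}$ (compatibility of the $d$ local cone structures along the walls, i.e.\ a regular-neighbourhood/cone statement for the whole complex), and neither Lemma \ref{lem:deletion_embedding} (an isometric-embedding statement about the \emph{primal} subcomplex $\{A\mid A_i=j\}$ inside a deletion) nor Lemma \ref{lem:mixsd_refinements} addresses this. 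The boundary step has the same problem one dimension down: the facet homeomorphisms obtained from the contractions $\contraction Sj$ are not canonical, so ``since contractions commute, they glue along the codimension-$2$ strata'' does not follow --- you would have to \emph{choose} them compatibly, which is again a nontrivial relative statement requiring its own induction. As written, the proposal reduces the theorem to unproven claims of essentially the same difficulty.

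There is also a concrete false step in your combinatorial identification of $H_i$. You claim two adjacent top-dimensional dual cells are separated by a wall of $H_i$ \tiff the topes satisfy $\deletion Ti=\deletion{\tilde T}i$, i.e.\ differ \emph{only} in coordinate $i$. Take $n=2$, $d=2$ and $S$ the trivial subdivision of $\dilsimp{2}{1}$ with the single maximal cell of type $(12,12)$: its topes are $(1,1)$ and $(2,2)$, and the dual wall between them lies in $H_1$ (and in $H_2$), yet $\deletion{(1,1)}{1}=(1)\neq(2)=\deletion{(2,2)}{1}$. In non-fine \mixsds the \tphps share cells, so your characterisation --- and with it the claimed identification of the face poset of $H_i$ --- fails; the correct statement is only that a wall dual to an edge $E$ lies in $H_i$ \tiff $\card{E_i}\geq2$, equivalently \tiff the two endpoint topes differ in coordinate $i$. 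The paper avoids all of this by inducting on $n$ rather than on $d$: deleting a coordinate $j\neq i$ gives a \mixsd of $\dilsimp{(n-1)}{d-1}$ in which the image of $H_i$ is a \tphp $h$ by induction, and Lemma \ref{lem:deletion_preimage_connected} shows the fibres of the deletion map are prisms, so the preimage $H_i$ is PL-homeomorphic to $h$ --- sidestepping every coning and gluing issue your route runs into.
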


\begin{proof}
We  prove the claim by induction over $n$. For $n=1$ this is true since then $S=\nsimplex{d-1}$ is the trivial subdivision, whose dual is the cell complex of one $(d-2)$-dimensional \thp in $\T^{d-1}$.

Now assume $n\geq 2$. Choose $i\neq j\in[n]$ and consider the deletion $\deletion Sj$. By Lemma \ref{lem:mixsd_deletion_contraction} this is a \mixsd of $\dilsimp{(n-1)}{d-1}$ and by induction the image of $H_i$ in $\deletion Sj$ is a \tphp $h$.

But $H_i$ is the preimage of $h$ under the deletion map.  
By Lemma \ref{lem:deletion_preimage_connected} this preimage is PL-homeomorphic to $h$ and hence a \tphp.
\end{proof}

\subsection{Arrangements of \tphps I}
In this section we suggest one definition for \tphpas. Note that another (equivalent) definition is given in \cite{toprep2}.
\begin{definition}\label{def:tphpa1}
An \defn[arrangement!of tropical!pseudohyperplanes]{arrangement of tropical pseudohyperplanes} is a finite family of \tphps such that
\begin{itemize}
\item the \tphps induce a regular subdivision of $\T^{d-1}$,
\item in the cell decomposition the points of equal type form 
a PL-ball\index{PL!ball} 
(in particular, there are no two cells with the same type),
\item the types satisfy the surrounding and comparability property\index{surrounding axiom}\index{comparability!axiom} and
\item the bounded cells are exactly those which correspond to bounded types. \index{bounded!type}\index{bounded!cell}
\end{itemize}
\end{definition}

The following theorem is the main result of this paper and can be seen as a first version of the Topological Representation Theorem for \toms.
\begin{theorem}[{\defn[Topological Representation Theorem!for tropical oriented matroids]{Tropological Representation Theorem}, Version I}]\label{thm:toprep1}
Let $n,d\geq 1$. The Poincaré dual of a \mixsd of $\dilsimp n{d-1}$ is a \tphpa as defined in Definition \ref{def:tphpa1}. Conversely, the dual of the cell decomposition of an arrangement of $n$ \tphps in $\TP^{d-1}$ is a \dnmixsd.
\end{theorem}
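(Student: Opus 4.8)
The plan is to treat the two directions separately, both organised around the Poincar\'e duality between a \mixsd $S$ of $\dilsimp n{d-1}$ and a cell decomposition of $\TP^{d-1}$, under which a cell $C$ of type $\mc T_C$ is matched with a dual cell $\dual C$ of complementary dimension. I will freely use that $C\mapsto\mc T_C$ is a bijection onto the types occurring in $S$, with inverse $A\mapsto\mc C_A$. For the forward direction I start from $S$ and produce the $n$ pseudohyperplanes directly from Theorem \ref{thm:dual_phps}: for each $i\in[n]$ the set $H_i=\{\dual C\mid C\in S,\ \card{C_i}\geq2\}$ is a \tphp, and the dual cells of $S$ assemble into a cell decomposition of $\TP^{d-1}$. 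What remains is to verify the four conditions of Definition \ref{def:tphpa1}.

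Regularity of this cell decomposition is the standard fact that the Poincar\'e dual of a polytopal subdivision of a ball is a regular CW complex whose closed cells are PL-balls; since distinct cells of $S$ carry distinct types, each type occurs on a single such dual cell, which simultaneously gives the PL-ball condition and rules out repeated types. The surrounding property is precisely Lemma \ref{lem:mixsd_refinements}: a refinement $R$ of $\mc T_C$ indexes a face $\mc C_R$ of $\mc C_C$, and faces of cells of $S$ are again cells of $S$. For comparability, recall that any two cells of a subdivision meet in a common face; a directed cycle in $\compgr AB$ would, by the reduction in the proof of Proposition \ref{prop:reconstruct_mixsd}, yield a tope of $S$ lying in the interior of $\mc C_A$ yet not a vertex of it, which is impossible in a subdivision, so $\compgr AB$ is acyclic. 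Finally $j\in[d]$ occurs in $A$ \tiff $\mc C_A$ avoids the $j$-th facet of $\dilsimp n{d-1}$, so bounded types are exactly the interior cells of $S$; under the duality these match the bounded cells of the arrangement, giving the last condition.

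For the converse I begin with a family satisfying Definition \ref{def:tphpa1}, let $M$ be its set of types, and must show that $\{\mc C_A\mid A\in M\}$ is a \mixsd of $\dilsimp n{d-1}$. Condition (1) of Definition \ref{def:mixed_sd} is automatic, since $\mc C_A=\sum_i\mc C_{A_i}$ is a Minkowski sum of faces of $\nsimplex{d-1}$. For the intersection condition (2), the comparability hypothesis gives that $\compgr AB$ is acyclic for all $A,B\in M$; then $\compgr A{A\cap B}$ is a subgraph of it and hence acyclic, so $A\cap B$ is a refinement of $A$ (Lemma \ref{lem:refinement_iff_acyclic}) and likewise of $B$, whence $\mc C_{A\cap B}$ is a common face by Lemma \ref{lem:mixsd_refinements}; Lemma \ref{lem:mixsd_comparable} then identifies $\mc C_A\cap\mc C_B$ with $\mc C_{A\cap B}$, interpreted as the empty set when $A\cap B$ has an empty position. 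Thus the Minkowski cells already meet face-to-face.

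The one remaining, and decisive, point is that these cells \emph{cover} $\dilsimp n{d-1}$; this is where the global content of the theorem sits, the combinatorial axioms needing to force the locally consistent cells to close up into a genuine subdivision rather than leave gaps. I would prove covering by induction on $n$, mirroring the deletion argument of Theorem \ref{thm:dual_phps}. For $n=1$ a single \tphp dualizes to the trivial subdivision of $\nsimplex{d-1}=\dilsimp 1{d-1}$. For the step I delete one pseudohyperplane: the induced coarsening along the deletion map $\deletion{\cdot\,}i$ is again an arrangement satisfying Definition \ref{def:tphpa1} (regularity, the PL-ball condition, surrounding, comparability and the bounded condition are all inherited), so by induction its dual covers $\dilsimp{(n-1)}{d-1}$. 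I then rebuild the deleted layer using Lemma \ref{lem:deletion_preimage_connected}, which shows the cells of $M$ lying over a fixed cell of the deleted subdivision organise into prisms $A\cup B$ with $\deletion Ai=\deletion Bi$, together with Proposition \ref{prop:reconstruct_mixsd}, which controls their total refinements. The hard part is precisely this reassembly at the boundary: verifying that the prisms tile $\dilsimp n{d-1}$ without gaps or overlaps, and here the bounded-cells condition of Definition \ref{def:tphpa1} is exactly what pins down the boundary behaviour and prevents the dual from covering too little or too much. With covering in hand, the face-to-face property and condition (1) complete the proof that the dual is a \mixsd.
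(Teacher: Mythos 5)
Your forward direction and the face-to-face part of your converse agree with the paper, which likewise invokes Theorem \ref{thm:dual_phps} together with \cite[Proposition 6.4]{Ardila/Develin} for one direction and Lemmas \ref{lem:mixsd_refinements} and \ref{lem:mixsd_comparable} for the other. The genuine gap is the covering step of the converse, which you yourself flag as ``the hard part'' and then leave as a plan rather than a proof. Moreover, the plan as stated is circular: Lemma \ref{lem:deletion_preimage_connected} and Proposition \ref{prop:reconstruct_mixsd} both carry the hypothesis that $S$ is a \dnmixsd\ -- exactly the conclusion you are after -- so neither may be used to ``rebuild the deleted layer'' of an arrangement whose dual is not yet known to be a \mixsd. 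To make your induction run you would first need arrangement-level analogues: that deleting one \tphp from an arrangement again satisfies all four axioms of Definition \ref{def:tphpa1} (the PL-ball condition for the coarsened decomposition is not obviously inherited, and is in fact the arrangement counterpart of Lemma \ref{lem:deletion_preimage_connected}), plus a prism/fibre statement proved from the arrangement axioms alone. None of this is established in your writeup, and it is not routine.

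The paper closes the covering step with a short direct argument that needs no induction: let $\mc C_A$ be a maximal cell of $S$ and $\mc C_B$ a facet of it; dually, $A$ is a vertex of the arrangement and $B$ an edge at $A$. By the bounded-cells axiom, $\mc C_B$ lies in the boundary of $\dilsimp n{d-1}$ \tiff $B$ is an unbounded edge; otherwise the second endpoint $A'$ of $B$ supplies the unique maximal cell $\mc C_{A'}$ on the other side of $\mc C_B$. Combined with the path-connectedness of the $1$-skeleton of the arrangement, this facet-matching forces the union of the maximal cells of $S$ to be all of $\dilsimp n{d-1}$. You correctly identified the role of the bounded-cells axiom in pinning down the boundary, but the pseudomanifold/connectivity mechanism that converts it into covering is absent from your argument; substituting it for your induction closes the gap without any new machinery.
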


\begin{proof}
Let $S$ be a \dnmixsd. By Theorem \ref{thm:dual_phps} and \cite[Proposition 6.4]{Ardila/Develin}, it is clear that $S$ satisfies the axioms in Definition \ref{def:tphpa1} above. 

Conversely, let $\mc A$ be an arrangement of \tphps in $\T^{d-1}$ as in Definition \ref{def:tphpa1}. We have to show that the types of the cells in the induced cell decomposition yield a \dnmixsd.
So let $S\coloneq \{\mc C_A\mid A \text{ type in the cell complex of }\mc A\}$. Then $S$ is a set of Minkowski cells in $\dilsimp n{d-1}$.

By Lemmas \ref{lem:mixsd_refinements} and \ref{lem:mixsd_comparable}, $S$
is a polytopal complex whose realisation is contained in $\dilsimp n{d-1}$. It remains to show that $S$ covers $\dilsimp n{d-1}$. We will use the fact that the $1$-skeleton of $\mc A$ is path-connected.

To this end, let $\mc C_A$ be a maximal cell in $S$ and let $\mc C_B$ be a facet of $\mc C_A$. Then $A$ corresponds to a vertex in $\mc A$ and $B$ corresponds to an edge containing $A$.  The cell $\mc C_B$ is contained in the boundary of $\dilsimp n{d-1}$ \tiff $B$ is unbounded. In this case $B$ is an unbounded edge in $\mc A$. If $\mc C_B$ is not on the boundary then there is a unique other maximal cell $\mc C_{A'}$ ``on the other side'' of $\mc C_B$, the other endpoint of $B$. Thus, $S$ covers the whole of $\dilsimp n{d-1}$.
\end{proof}

\nocite{mydiss}
\printbibliography
\end{document}